\title[ ]{Analytic  solutions of nonlinear elliptic  equations on rectangular tori}
\author{Yunfeng Shi}
\address[Y. Shi]{School of Mathematical Sciences,
Peking University,
Beijing,
China} \email{yunfengshi18@gmail.com}
\keywords{Analytic periodic solutions, nonlinear elliptic equations, rectangular tori, Nash-Moser iterations, Anderson localization, KAM theory.}
\theoremstyle{plain}
\newtheorem{thm}{Theorem}[section]
 \newtheorem{lem}[thm]{Lemma}
 \theoremstyle{definition}
 \newtheorem{defn}[thm]{Definition}
 \theoremstyle{remark}
 \newtheorem{rem}[thm]{Remark}
 \numberwithin{equation}{section}
\begin{document}


\begin{abstract}
In this paper, we consider the  nonlinear elliptic equations  on rectangular tori.
Using methods in the study of KAM theory and Anderson localization, we prove that these equations admit many \textit{analytic}   solutions.
\end{abstract}

\maketitle
\section{Introduction and main results}
  In this paper, we investigate the following  equation  on rectangular  tori
\begin{equation}\label{op}
 -\Delta u-mu+\epsilon (f(x)u^p+g(x))=0, \  x\in\prod _{i=1}^d\left(\mathbb{R}/2\pi\beta_i\mathbb{Z}\right), d\geq 1,
\end{equation}
where $\beta=(\beta_1,\cdots,\beta_d)\in \left[\frac{1}{2},1\right]^d$,   $ p>1,\epsilon\geq 0$ and $m>0$. We assume further $f(x),g(x)$ are real trigonometric polynomials satisfying $g\not\equiv0$. Let  $\mathbb{T}^{d}:=\mathbb{R}^{d}/(2\pi\mathbb{Z})^{d}$. 
Performing a change of variables, it is convenient to consider instead the following equation
\begin{equation}\label{op1}
-\Delta_\nu u-mu+\epsilon (f(x)u^p+g(x))=0, \  x\in\mathbb{T}^{d},
\end{equation}
where
$$\Delta_{\nu}:=-\sum_{i=1}^d\nu_i^{2}\frac{\partial^2}{\partial x_i^2},\  \nu=(\beta_1^{-1},\cdots,\beta_d^{-1})\in [1,2]^d.$$

Our aim of the present paper is to find periodic solutions of  (\ref{op1}).
In fact, it is easy to see $u=0$ is not a solution of (\ref{op1}) if $\epsilon\neq0$.  Hence it is meaningful to look for periodic solutions of  (\ref{op1}) with positive $\epsilon$.

We have the following main result.
\begin{thm}\label{mthm} For any $\delta>0$, there is some $\epsilon_0>0$ depending only on $\delta,f,g,m,p,d$ such that,
for $0\leq\epsilon\leq \epsilon_0$,
there exists a set   $\Omega=\Omega{(\epsilon)}\subset [1,2]^{d}$ of Lebesgue measure $\mathrm{mes} ([1,2]^{d}\setminus \Omega)\leq \delta $ such that,
for $\nu\in \Omega$,  $\mathrm{(\ref{op1})}$ admits an  analytic  solution. More precisely, for any $\nu\in \Omega$, there exists some  $u_{\nu}\in C^{\omega}(\mathbb{T}^{d},\mathbb{R})$  so that $u_{\nu}(x)$ is a solution of  $\mathrm{(\ref{op1})}$.
\end{thm}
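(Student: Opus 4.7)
The plan is to solve (\ref{op1}) by a Nash--Moser iteration on a scale of analytic Banach spaces, viewing $\nu\in[1,2]^{d}$ as an external parameter from which a small ``resonant'' subset will be discarded. Rescaling $u=\epsilon v$ puts (\ref{op1}) into the form
$$F(v;\nu)=L_\nu v+g+\epsilon^{p}fv^{p}=0,\qquad L_\nu:=-\Delta_\nu-m,$$
and we seek $v$ in the analytic space $H_{\rho}=\{v:\|v\|_\rho=\sum_{n\in\Z^{d}}|\widehat v_n|e^{\rho|n|}<\infty\}$ for some $\rho>0$. The natural first approximation is $v_0=-L_\nu^{-1}g$, which is a trigonometric polynomial whenever $\nu$ avoids the finitely many values at which $\lambda_n(\nu):=\sum_{i}\nu_i^{2}n_i^{2}-m$ vanishes for some $n\in\mathrm{supp}\,\widehat g$. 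I would then run the Newton scheme $v_{k+1}=v_k-T_{N_{k+1}}F'(v_k)^{-1}T_{N_{k+1}}F(v_k)$, where $T_N$ is the Fourier truncation at mode $N$, $N_k\to\infty$ is a prescribed geometric sequence, and $\rho_k\searrow\rho_\infty>0$ is a decreasing sequence of analyticity radii.

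The decisive step is the (approximate) inversion of the linearised operator
$$F'(v_k)=L_\nu+\epsilon^{p}pf(x)v_k^{p-1},$$
with Green's function estimates on truncations that guarantee the off-diagonal exponential decay
$$\bigl|(T_{N_{k+1}}F'(v_k)T_{N_{k+1}})^{-1}(n,n')\bigr|\leq N_{k+1}^{C}e^{-c|n-n'|},\qquad |n|,|n'|\leq N_{k+1},$$
for $\nu$ in a large-measure subset of $[1,2]^{d}$. This is the Anderson-localization input: although $\lambda_n(\nu)$ is coercive in $n$ (so only the finitely many $n$ with $|n|^{2}\lesssim m$ are truly resonant), the multiplication by the analytic function $pfv_k^{p-1}$ couples all Fourier modes and can, in principle, destroy analyticity of the inverse. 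To control this I would proceed by a multi-scale induction in the spirit of Bourgain--Goldstein--Schlag and Berti--Bolle, proving inductively that outside an excluded set $\mathcal{B}_k\subset[1,2]^{d}$ of measure $\leq\delta\,2^{-k}$: (i) a Diophantine-type bound $|\lambda_n(\nu)|\geq\gamma(1+|n|)^{-\tau}$ holds for every $|n|\leq N_{k+1}$, via a sub-level-set estimate for the transversal polynomial $\lambda_n(\nu)$ in $\nu$; and (ii) the ``bad'' sites in $\Z^{d}$ where the resolvent is large can be covered by few, well-separated boxes, so that a resolvent identity propagates Green's function decay from scale $N_k$ to $N_{k+1}$ using the smallness of the potential.

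With these estimates in hand, the Nash--Moser iteration converges super-exponentially: standard interpolation and commutator estimates bound the Newton increment by $\|F(v_k)\|_{\rho_k}^{2}$ up to a polynomial loss in $N_{k+1}$ and $(\rho_k-\rho_{k+1})^{-1}$, and the quadratic gain absorbs these losses. The limit $v_\infty\in H_{\rho_\infty}$ is real analytic, and $u_\nu:=\epsilon v_\infty$ lies in $C^{\omega}(\T^{d},\R)$ and solves (\ref{op1}). Setting $\Omega:=[1,2]^{d}\setminus\bigcup_k\mathcal{B}_k$ gives $\mathrm{mes}([1,2]^{d}\setminus\Omega)\leq\delta$, as required. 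The principal obstacle is exactly step (ii) of the multi-scale Green's function estimate: controlling the analytic off-diagonal decay of the inverse of $L_\nu$ plus a small analytic potential, uniformly in $\nu$ on a large-measure set, is where Anderson-localization techniques are essential, and it is here that the bulk of the technical work would go.
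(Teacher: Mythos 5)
Your overall strategy coincides with the paper's: reformulate on the Fourier lattice, run a Nash--Moser/Newton scheme in scales of analytic (exponentially weighted $\ell^1$ or $\ell^2$) norms, and excise a small-measure bad set of parameters $\nu$ at each scale while propagating off-diagonal exponential decay of truncated Green's functions by a multi-scale resolvent argument. Your rescaling $u=\epsilon v$ is a cosmetic variant of the paper's start $q_0=0$, $\Delta_1 q = -\epsilon D^{-1}\widehat g$. So the skeleton is right, and you correctly identify where the work lies.

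The genuine gap is in step (i)/(ii). You propose to control the bad set via a Diophantine-type sublevel estimate on the diagonal symbol $\lambda_n(\nu)=\sum_i\nu_i^2 n_i^2-m$ together with a generic ``few well-separated bad boxes'' covering. Neither of these is what makes the argument close. First, the condition $|\lambda_n(\nu)|\geq\gamma(1+|n|)^{-\tau}$ is essentially vacuous here (as you yourself notice, $\lambda_n$ is coercive, so only the finitely many $n$ with $|n|^2\lesssim m$ matter), and, more seriously, what the multi-scale step actually needs is control of the $\theta$-shifted operators $F_u(\theta)$, $\theta\in\mathbb{R}^d$, not merely the $\theta=0$ diagonal. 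The paper's crucial structural observation is the \emph{covariance property} $F_u(\theta)(n+\overline n,n'+\overline n)=F_u(\theta+\overline n\nu)(n,n')$, which recasts the resolvent at a shifted box as the resolvent at $\theta=0$ of a shifted-parameter operator, putting the problem squarely in the framework of Anderson localization for quasi-periodic operators on $\mathbb{Z}^d$. Second, the ``few well-separated bad boxes'' assertion is exactly the hard combinatorial/measure-theoretic step; for $d\geq 3$ it cannot be obtained by a plain Diophantine hypothesis on the frequencies and instead requires the semi-algebraic-set projection and sublevel lemmas (Lemmas~\ref{projl}, \ref{svl}, \ref{rmf}) together with the matrix-valued Cartan estimate (Lemma~\ref{mcl}), combined into the Large Deviation Theorem (Theorem~\ref{ldt}) and its stability under small perturbations of $u$ (Remark~\ref{krem}, Theorem~\ref{ldtr}). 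In addition, the paper must track that the iterates $q_r(\nu,\cdot)$ are \emph{rational in $\nu$} of controlled degree so that the sets occurring at each step remain semi-algebraic of controlled complexity; your proposal does not address this, and without it the projection lemmas cannot be invoked. Finally, citing Berti--Bolle is a mismatch: their multi-scale scheme targets Sobolev-regular solutions and does not yield the exponential off-diagonal decay needed to land in $C^\omega$, which is the whole point of the theorem. The machinery the paper actually uses to produce analytic solutions is the Bourgain (2007) / Jitomirskaya--Liu--Shi $\mathbb{Z}^d$ localization apparatus.
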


To prove the existence of solutions for a nonlinear elliptic equation, methods of the calculus of variations, bifurcation theory and topological degree have been
used.  By contrast, we will make use of the methods developed in  KAM (Kolmogorov-Arnold-Moser)  and Anderson localization theory. To the best of our knowledge, there is no result about the existence of \textit{analytic}  solutions for nonlinear elliptic
equations  on rectangular tori.

\begin{rem}
Regarding the rectangular tori,  there have been plenty of results on the study of Schr\"odinger equations on rectangular tori.
Bourgain \cite{Bou1993} firstly  addressed the question of Strichartz estimates for Schr\"odinger equations on tori.
In \cite{BB2007}, Bourgain carried out the study of Strichartz estimates for Schr\"odinger equations on \textit{any} $3$-dimensional rectangular tori. Later, Guo-Oh-Wang \cite{GOW}
proved some new Strichartz estimates for linear Schr\"odinger equations on \textit{any} $d$-dimensional irrational tori.
  Remarkably,   Bourgain-Demeter \cite{BD2015} proved the sharp estimates for linear Schr\"odinger equations on \textit{any} irrational tori.
 Recently, Deng-Germain-Guth \cite {DGG2017} obtained the  Strichartz estimates over large time scales for the Schr\"odinger equations on \textit{generic} rectangular tori.
 This indeed inspires us to study nonlinear elliptic equations  on \textit{most} rectangular tori. So far, there are also many results about Sobolev norms growth for equations
  on rectangular tori,  see  e.g. \cite{BMAR,DCPAM,DGIMRN,CWCPAA}.
\end{rem}

 \begin{rem}
 As we will see later, the \textit{small-divisors} difficulty appears in the present work.  
 In fact, the KAM  techniques and CWB (Craig-Wayne-Bourgain) methods are powerful tools to overcome the \textit{small-divisors} difficulty.
  The KAM results, such as the existence results of quasi-periodic (or almost-periodic) solutions for nonlinear Hamiltonian PDEs have been widely studied in the literature.
  In particular, the study for $1$-dimensional PDEs has attracted a great deal of attention over years and is well understood \cite{CLSY,LYCPAM,YJDE,YCMP,LYCMP,CWCPAM,KP1996,WaCMP,BIMRN,BJFA,Kuk1987}.
  In high dimensional case, the first result was due to Bourgain \cite{Bou1998}.
  Significantly, using the so called CWB mehods,  he proved the existence of quasi-periodic solutions for a class of nonlinear Schr\"odinger equations (NLS) on $\mathbb{T}^2$.
  Later in \cite{BB2005}, by introducing techniques from the study of Anderson localization theory  in \cite{BGS2002},  Bourgain  established  existence results of quasi-periodic solutions for NLS and nonlinear wave equations (NLW) in \textit{arbitrary} dimension $d$.
  The standard KAM approach has been  extended by Eliasson-Kuksin \cite{EK2010} to NLS on \textit{arbitrary} dimensional torus $\mathbb{T}^d$.
  Recently, Wang \cite{Wang2016} considered a class of completely resonant NLS on $\mathbb{T}^d$ with supercritical nonlinearities, and also showed the existence of quasi-periodic solutions.
  Berti-Bolle dealt with  NLS \cite{BBJEMS} and NLW \cite{BBNL} on $\mathbb{T}^d$ with  finitely differentiable nonlinearity and obtained the existence of Sobolev regular quasi-periodic solutions.  Berti-Bolle made use of a modified Nash-Moser iterations together with the multi-scale analysis.
  In a latest work by Berti-Maspero \cite{BMAR}, they proved the existence of Sobolev regular quasi-periodic solutions for  the NLW and NLS on \textit{arbitrary} rectangular tori. We should remak that all existence solutions mentioned above are at most  Gevrey regular  in time or space variables.
  In \cite{Yuanar}, Yuan developed  a new KAM scheme so that he can deal with equations with normal frequencies having finite limit-points.
 In particular, Yuan proved the existence of quasi-periodic solutions for generalized Pochhammer-Chree equations on \textit{a.e.} rectangular tori.
 Very recently, Wang \cite{WMWF} proved the existence of analytic quasi-periodic  Floquet-Bloch solutions for NLS on $\mathbb{R}^d$. In fact, the present paper is also motivated  by work of Berti-Maspero \cite{BMAR}, Wang \cite{WMWF} and Yuan \cite{Yuanar}.
  \end{rem}

 \begin{rem}
 Regarding the methods,  we use mainly the Nash-Moser iterations in  \cite{BB2005} as well as the Green's function estimates developed by Bourgain \cite{B2007}.
 As is well-known, the key to the Nash-Moser iterations is the appropriate estimates on the inverses (or Green's functions) of the linearized operators.
 If we regard $(\nu_1,\cdots,\nu_d)$  as parameters, then the \textit{covariance} property (see (\ref{cov}) in section 3) holds. More importantly, in this case the Green's functions are quite similar to that in the study of Anderson localization for quasi-periodic operators on $\mathbb{Z}^d$.
 This observation may lead to good controls of the Green's functions,  
 i.e.,   off-diagonal \textit{exponential} decays of the Green's functions.
  Let us recall briefly some Anderson localization results. For $d=1$, Bourgain-Goldstein \cite{BG2000}  originally established the non-perturbative Anderson localization for general quasi-periodic Schr\"odinger operators with real analytic potentials. They introduced the powerful semi-algebraic sets methods to eliminate the resonances.  Along this line and combining with the multi-scale analysis, Bourgain \cite{Bou2002} even proved the Anderson localization for a class of ergodic operators with   skew shifts and then got the localization and  almost periodicity of the waves for some quantum kicked rotor model. When considering on $\mathbb{Z}^d$ ($d\geq 2$),  the large deviation theorem (LDT) for Green's functions can not be derived directly from the Diophantine properties of the frequencies and the semi-algebraic sets considerations.  Actually, by posing an arithmetic condition on the frequency together the matrix-valued Cartan estimate when proving the LDT, Bourgain-Goldstein-Schlag \cite{BGS2002} showed the Anderson localization for quasi-periodic Schr\"odinger operators on $\mathbb{Z}^2$. Techniques of \cite{BGS2002} were used by Bourgain and Wang to study KAM results for high dimensional PDEs as well as some spectral problems \cite{BB2005,BW2008,BW2004,Wang2008,Wang2016,WangKG}.  For $d\geq 3$,  it is difficult to impose a similar arithmetic condition on the frequencies. To overcome this problem, Bourgain \cite{B2007} introduced new methods and successfully extended results of \cite{BGS2002} to \textit{arbitrary} dimension $d$. The basic techniques of \cite{B2007} are also  semi-algebraic sets and matrix-valued Cartan estimate, but involve more delicate analysis. Recently, methods of Bourgain \cite{B2007} were used by Goldstein-Schlag-Voda \cite{GSV} to study multi-frequency quasi-periodic Schr\"odinger operators on $\mathbb{Z}$, and by  Jitomirskaya-Liu-Shi \cite{JLS} to study a class of long-range quasi-periodic operators on $\mathbb{Z}^d$  with more general ergodic transformations.  The results of \cite {JLS} can be adapted to our problem here.
 \end{rem}

This paper is organized as follows. The basic notations are introduced in section 2. A reformulation of the problem on $\mathbb{Z}^{d}$ is presented in section 3. The key ingredient, i.e., LDT for Green's functions is proved in section 4. In section 5, the main result is established by using Nash-Moser iterations.

We want to point out that the present paper is not self-contained but relies
heavily on \cite{BB2005,B2007,JLS}.
\section{Notations} We define $a\ll b$ if there is some small $\varepsilon>0$ so that $a\leq \varepsilon b$. We write $a\sim b$ if $a\ll b$ and $b\ll a$. We write $a\pm$ to denote $a\pm\varepsilon$ for some small $\varepsilon$.

For any $x\in\mathbb{R}^{d_1}$ and $X\subset\mathbb{R}^{d_1+d_2}$,  define the $x$-section of $X$ to be the set
$$X(x)=\{y\in\mathbb{R}^{d_2}:\  (x,y)\in X\}.$$


We denote by  $\lfloor x\rfloor$  the integer part of some $x\in\mathbb{R}$.

For any $x\in\mathbb{R}^d$, let $|x|=\max\limits_{1\leq i\leq d}|x_i|$. For $U_1, U\subset\mathbb{R}^d$,  we introduce
$$\mathrm{diam}(U)=\sup_{n,n'\in U}|n-n'|, \ \mathrm{dist}(m,U)=\inf_{n\in U}|m-n|,$$
and $\mathrm{dist}(U_1,U)=\inf\limits_{n\in U_1}\mathrm{dist}(n,U)$.


\section{A reformulation of the equations  on  $\mathbb{Z}^{d}$}

Notice that (\ref{op1}) can be transformed into nonlinear equations on lattice $\mathbb{Z}^{d}$ via the standard Fourier arguments. More precisely, one just needs consider the following equations
\begin{equation}\label{op3}
 F(\widehat{u})(n)=0, \  n\in\mathbb{Z}^{d},
\end{equation}
where
\begin{equation*}F(\widehat{u})(n)=\left(\sum_{i=1}^d\nu_i^2n_i^2-m\right)\widehat{u}(n)
+\epsilon (\widehat{fu^p}+\widehat{g})(n).\end{equation*}
We want to solve (\ref{op3}) and thus employ  the Nash-Moser iterations. The linearized operator of $F$ at $u$ (we write $u=\widehat{u}$ for simplicity) reads
\begin{equation*}
F_u:= D+\epsilon S_u, u\in \mathbb{C}^{\mathbb{Z}^{d}},
\end{equation*}
where
\begin{eqnarray*}
D=\mathrm{diag}\left(\sum_{i=1}^d\nu^2_in_i^2-m\right)_{n\in\mathbb{Z}^{d}}
\end{eqnarray*}
is a diagonal operator,  and
\begin{eqnarray*}
S_u(n,n')=p\widehat{fu^{p-1}}(n-n')
\end{eqnarray*}
is a T\"oplitz operator. Define for $\theta\in\mathbb{R}^d$ the following operators
\begin{equation}\label{fu}F_u(\theta)=\mathrm{diag}\left(\sum_{i=1}^d({\nu_i}n_i+\theta_i)^2-m\right)_{n\in\mathbb{Z}^d}
+\epsilon S_u.\end{equation}
Then  the  \textit{covariance} property  holds:
\begin{equation}\label{cov}F_u(\theta)(n+\overline{n},n'+\overline{n})=F_u(\theta+\overline{n}{\nu})(n,n'),\end{equation}
where $$\overline{n}=(\overline{n_1},\cdots,\overline{n_d}),\  \overline{n}{\nu}=(\overline{n}_1{\nu_1},\cdots,\overline{n}_d{\nu_d}).$$

We should remark that the \textit{small-divisors} here are 
$$\sum_{i=1}^d{\nu_i}^2n_i^2-m.$$

\section{LDT for Green's functions}
The key of the Nash-Moser iterations is to get \textit{good} estimates of $$G_\Lambda^u(\theta)=(R_\Lambda F_u(\theta)R_\Lambda)^{-1},$$  where  $R_\Lambda$ is the restriction operator on $\Lambda\subset\mathbb{Z}^{d}$, and $F_u(\theta)$ is defined by (\ref{fu}).

We call $G_\Lambda^u(\theta)$
a Green's function.

For some technical reasons, we need introduce elementary regions on $\mathbb{Z}^{d}$.
Given $N>0$, $\emptyset\neq I\subset\{1,\cdots,d\}$ and $\varsigma=(\varsigma_i)_{i\in I}\in \{<,>\}^{I}$, define
$$Q_N(I,\varsigma):=[-N,N]^{d}\setminus\{n\in\mathbb{Z}^{d}: \ n_i\varsigma_i 0, i\in I\}.$$
Let $$\mathcal{E}_N^{0}:=\{[-N,N]^{d}\}\bigcup\limits_{\emptyset\neq I\subset\{1,\cdots,d\}, \varsigma\in \{<,>\}^{I}}\{Q_N(I,\varsigma)\}$$ and
$$\mathcal{E}_N:=\bigcup\limits_{n\in\mathbb{Z}^{d},Q\in \mathcal{E}_N^{0}}\{n+Q\}.$$

We write $G_N^u(\theta)=G_\Lambda^u(\theta)$ if  $\Lambda\in \mathcal{E}_N^0$.  From (\ref{cov}), the structure of $G_N^u(\theta)$ is similar to that of Green's functions  in  Anderson localization theory for lattice quasi-periodic Schr\"odinger operators in \cite{B2007}.

Recently, Jitomirskaya-Liu-Shi \cite{JLS} extended Bourgain's results \cite{B2007} to more general ergodic transformations as well as long-range interactions case.
Actually, the proofs of \cite{JLS} can be used in the present work with some   modifications,  and then imply the LDT for $G_N^u(\theta)$. The main techniques employed here are semi-algebraic sets analysis and the matrix-valued Cartan estimate.


Firstly, we introduce some useful facts about the \textit{semi-algebraic} sets.
\begin{defn}[Chapter 9, \cite{BB2005}]
A set $\mathcal{S}\subset \mathbb{R}^n$ is called a \textit{semi-algebraic} set if it is a finite union of sets defined by a finite number of polynomial equalities and inequalities. More precisely, let $\{P_1,\cdots,P_s\}\subset\mathbb{R}[x_1,\cdots,x_n]$ be a family of real polynomials whose degrees are bounded by $d$. A (closed) semi-algebraic set $\mathcal{S}$ is given by an expression
\begin{equation}\label{smd}
\mathcal{S}=\bigcup\limits_{j}\bigcap\limits_{\ell\in{\mathfrak{L}}_j}\left\{x\in\mathbb{R}^n: \ P_{\ell}(x)\varsigma_{j\ell}0\right\},
\end{equation}
where ${\mathfrak{L}}_j\subset\{1,\cdots,s\}$ and $\varsigma_{j\ell}\in\{\geq,\leq,=\}$. Then we say that $\mathcal{S}$ has degree at most $sd$. In fact, the \textit{degree} of $\mathcal{S}$ which is denoted by $\deg(\mathcal{S})$, means the  smallest $sd$ over all representations as in (\ref{smd}).
\end{defn}



\begin{lem}[\cite{B2007}]\label{projl}
Let $\mathcal{S}\subset[0,1]^{d=d_1+d_2}$ be a semi-algebraic set of degree $\deg(\mathcal{S})=B>0$ and $\mathrm{mes}(\mathcal{S})\leq\eta>0$, where
\begin{equation*}
\log B\ll \log\frac{1}{\eta}.
\end{equation*}
Denote by $(x_1,x_2)\in[0,1]^{d_1}\times[0,1]^{d_2}$ the product variable. Fixing
$$ \eta^{\frac{1}{d}}\leq\varepsilon\ll1,$$
then there is a decomposition of $\mathcal{S}$ as
$$\mathcal{S}=\mathcal{S}_1\cup\mathcal{S}_2$$
with the following properties. The projection of $\mathcal{S}_1$ on $[0,1]^{d_1}$ has small measure
$$\mathrm{mes}_{d_1}(\mathrm{Proj}_{x_1}\mathcal{S}_1)\leq B^{C(d)}\varepsilon,$$
and $\mathcal{S}_2$ has the transversality property
$$\mathrm{mes}_{d_2}(\mathcal{L}\cap \mathcal{S}_2)\leq B^{C(d)}\varepsilon^{-1}\eta^{\frac{1}{d}},$$
where $\mathcal{L}$ is a $d_2$-dimensional hyperplane in $[0,1]^d$ s.t.,
$$\max\limits_{1\leq j\leq d_1}|\mathrm{Proj}_\mathcal{L}(e_j)|<{\varepsilon},$$
where we denote by $e_1,\cdots,e_{d_1}$ the $x_1$-coordinate vectors.
\end{lem}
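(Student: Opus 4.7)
The plan is to decompose $\mathcal{S}$ by the size of its vertical fibers $\mathcal{S}(x_1):=\{x_2\in[0,1]^{d_2}:(x_1,x_2)\in\mathcal{S}\}$, collecting the thick-fiber part into $\mathcal{S}_1$ (controlled by Chebyshev's inequality) and leaving uniformly thin fibers in $\mathcal{S}_2$. Transversality for $\mathcal{S}_2$ will then follow via a Yomdin-Gromov $C^1$-reparametrization, which is legitimate thanks to Milnor-Thom type Betti bounds for semialgebraic sets of degree $\leq B$.

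First I would fix a threshold $T$ of order $\eta^{d_2/d}\varepsilon^{-(d-d_2)}$, to be optimized against the hypothesis $\eta^{1/d}\leq\varepsilon$, and set
$$\widetilde{\mathcal{B}}:=\bigl\{x_1\in[0,1]^{d_1}:\mathrm{mes}_{d_2}(\mathcal{S}(x_1))>T\bigr\}.$$
Quantitative quantifier-elimination (Basu-Pollack-Roy) provides a semialgebraic majorant of $\widetilde{\mathcal{B}}$ of degree $\leq B^{C(d)}$, while Fubini yields
$$T\cdot\mathrm{mes}_{d_1}(\widetilde{\mathcal{B}})\leq\int_{[0,1]^{d_1}}\mathrm{mes}_{d_2}(\mathcal{S}(x_1))\,dx_1=\mathrm{mes}(\mathcal{S})\leq\eta,$$
so that $\mathrm{mes}_{d_1}(\widetilde{\mathcal{B}})\leq\eta/T\leq B^{C(d)}\varepsilon$. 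Then take $\mathcal{S}_1:=\mathcal{S}\cap(\widetilde{\mathcal{B}}\times[0,1]^{d_2})$ and $\mathcal{S}_2:=\mathcal{S}\setminus\mathcal{S}_1$; the first conclusion is immediate from $\mathrm{Proj}_{x_1}\mathcal{S}_1\subset\widetilde{\mathcal{B}}$.

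Second, for the transversality bound on $\mathcal{S}_2$, parametrize the almost-horizontal $d_2$-hyperplane as $\mathcal{L}=\{(L(x_2),x_2):x_2\in[0,1]^{d_2}\}$ with $L$ affine and $\|DL\|\lesssim\varepsilon$, so that
$$\mathrm{mes}_{d_2}(\mathcal{L}\cap\mathcal{S}_2)\simeq\mathrm{mes}_{d_2}\bigl\{x_2\in[0,1]^{d_2}:(L(x_2),x_2)\in\mathcal{S}_2\bigr\}.$$
By Yomdin-Gromov, cover $\mathcal{S}_2$ by $N\leq B^{C(d)}$ semialgebraic pieces, each realized as a $C^1$-graph in a suitable coordinate direction with uniformly bounded Jacobian. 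On each piece the intersection with $\mathcal{L}$ is the zero set of an implicit map whose relevant derivative is of order $\varepsilon$ (inherited from $\|DL\|<\varepsilon$); combining with the uniform fiber bound $\mathrm{mes}_{d_2}(\mathcal{S}_2(x_1))\leq T$ and summing over the $N$ pieces yields $\mathrm{mes}_{d_2}(\mathcal{L}\cap\mathcal{S}_2)\leq B^{C(d)}\varepsilon^{-1}\eta^{1/d}$ after the choice of $T$.

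The main obstacle is the Yomdin-Gromov reparametrization step in the second paragraph: one must cover $\mathcal{S}_2$ by polynomially many ($\leq B^{C(d)}$) Lipschitz graph pieces so that pointwise fiber information can be upgraded to a bound on transversal intersections against near-horizontal planes. This step fixes the constant $C(d)$ in both conclusions and is where the interplay between the semialgebraic complexity $B$, the ambient measure $\eta$, and the horizontality scale $\varepsilon$ is resolved. Everything else is routine Chebyshev plus Fubini bookkeeping.
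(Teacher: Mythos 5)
The paper itself gives no proof of this lemma; it is quoted from Bourgain's work \cite{B2007}, so there is no internal argument to check you against, and I am assessing your sketch on its own merits.

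Your first paragraph is fine: the Chebyshev/Fubini cut by fiber measure does yield $\mathrm{mes}_{d_1}(\mathrm{Proj}_{x_1}\mathcal{S}_1)\leq\eta/T$, and a quantifier-elimination bound keeps $\widetilde{\mathcal{B}}$ semialgebraic of degree $\leq B^{C(d)}$. The real gap is the second paragraph, which you yourself flag as ``the main obstacle'' but then treat as if it were isolable from the rest. It is not. The quantity $\mathrm{mes}_{d_2}\{x_2:(L(x_2),x_2)\in\mathcal{S}_2\}$ is a \emph{diagonal} quantity: for each $x_2$ it asks whether the particular point $x_2$ lies in the particular fiber $\mathcal{S}_2(L(x_2))$. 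A uniform bound on fiber measures does not control this at all, because as $x_2$ varies the fiber moves with $L(x_2)$ and can in principle ``track'' $x_2$, giving intersection measure of order $1$ while every fiber stays of measure $\leq T$ (a thin tube $\{|x_2-\psi(x_1)|<T/2\}$ around a steep graph $\psi$ does exactly this). What actually rules this out is a Markov/Remez-type inequality: a semialgebraic set of degree $\leq B$ cannot contain a tube whose ``graph'' has slope $\gg B^{2}$, and the hypothesis $\log B\ll\log\frac{1}{\eta}$ is then essential to compare $B^{O(1)}$ against $\varepsilon^{-1}\eta^{1/d}$. You never invoke this hypothesis, never produce the Markov/Remez bound, and merely assert that ``on each piece the intersection with $\mathcal{L}$ is the zero set of an implicit map whose relevant derivative is of order $\varepsilon$'' — but $\mathcal{S}_2$ is codimension zero, its Yomdin--Gromov cells are $d$-dimensional, and intersecting a $d$-cell with $\mathcal{L}$ is not a zero set of an implicit function, so the sentence has no content as stated. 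Note also that Bourgain's argument in \cite{B2007} does not cut by fiber size at all: the decomposition there comes out of the reparametrization itself, with pieces classified by how degenerate the $x_1$-projection of the parametrizing map is, and both estimates are read off from the co-area/Jacobian data of the cells. Your decomposition may ultimately be salvageable, but as written the transversality step is not ``routine bookkeeping''; it is the entire lemma, and it is missing.
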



\begin{lem}[Lemma 1.18, \cite{B2007}]\label{svl}
Let $\mathcal{S}\subset [0,1]^{d_1+d_2}$ be a semi-algebraic set of degree $B$ and such that
$$\mathrm{mes}_{d_1}(\mathcal{S}(x))<\eta\ \mathrm{for}\ \forall\ x\in [0,1]^{d_2}.$$
Then the set
\begin{equation*}
\left\{(y_1,\cdots,y_{2^{d_2}})\in [0,1]^{d_12^{d_2}}:\  \bigcap\limits_{1\leq i\leq 2^{d_2}}\mathcal{S}(y_i)\neq\emptyset\right\}
\end{equation*}
is semi-algebraic of degree at most $B^{C}$ and measure at most
\begin{equation*}
B^{C}\eta^{d_1^{-{d_2}}2^{-d_2(d_2-1)/2}},
\end{equation*}
where $C=C(d_1,d_2)>0$.
\end{lem}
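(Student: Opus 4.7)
The plan has two components: establishing semi-algebraicity with a polynomial degree bound, and proving the measure estimate. For the first, the set in question is defined by a single existential quantifier $\exists v$ applied to a conjunction of $2^{d_2}$ conditions inherited from $\mathcal{S}$; the Tarski--Seidenberg theorem together with its quantitative form (e.g.\ Basu--Pollack--Roy) yields a semi-algebraic description of degree at most $B^{C(d_1,d_2)}$. This is routine bookkeeping and I would dispense with it first.

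For the measure bound I would proceed by induction on $d_2$, using a doubling trick. The base case $d_2=0$ reduces to the trivial bound $|\mathcal{S}|\leq\eta$. For the inductive step, split the parameter as $v=(v_1,v')\in [0,1]\times[0,1]^{d_2-1}$ and form the auxiliary semi-algebraic set
\[
\hat{\mathcal{S}} = \bigl\{(u,u',v'):\ \exists\,v_1\in[0,1],\ (u,v_1,v')\in\mathcal{S}\ \text{and}\ (u',v_1,v')\in\mathcal{S}\bigr\}\subset[0,1]^{2d_1+d_2-1}.
\]
For each $v'$, the pre-image (before projecting out $v_1$) satisfies
\[
\int_0^1 |\mathcal{S}(v_1,v')|_{d_1}^2\,dv_1 \leq \eta\int_0^1|\mathcal{S}(v_1,v')|_{d_1}\,dv_1 \leq \eta^2
\]
by the hypothesis. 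Applying Lemma 4.2 (the projection lemma) to this pre-image converts the $\eta^2$ control into an effective bound on the $v'$-section of $\hat{\mathcal{S}}$ itself, of the form $B^C\eta^{2/(2d_1+1)}$ for a judicious choice of the auxiliary parameter $\varepsilon$. One then invokes the inductive hypothesis for $\hat{\mathcal{S}}$ with parameters $(d_1',d_2',\eta')=(2d_1,\,d_2-1,\,B^C\eta^{2/(2d_1+1)})$. The elementary but crucial observation is that $\bigcap_{i=1}^{2^{d_2}}\mathcal{S}(y_i)\neq\emptyset$ forces the weaker paired condition $\bigcap_{i=1}^{2^{d_2-1}}\hat{\mathcal{S}}(y_{2i-1},y_{2i})\neq\emptyset$, so the set we wish to bound sits inside the set provided by the inductive hypothesis, and a direct tracking of the propagated exponent collapses it to exactly $d_1^{-d_2}\,2^{-d_2(d_2-1)/2}$.

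The main obstacle is precisely the projection-to-section conversion inside the inductive step: in general the measure of a semi-algebraic $v'$-section of $\hat{\mathcal{S}}$ can strictly exceed the total measure of its pre-image once one accounts for the dimension of the projected-out variable, so Fubini alone does not suffice. Lemma 4.2 is designed for exactly this passage, trading a modest loss $\varepsilon$ in the projection for quantitative control on transversal slices. The ensemble of quantitative losses from Lemma 4.2 (one factor of roughly $d_1^{-1}$ per iteration) combined with the doubling (which halves the tuple length at each step) compounds over $d_2$ iterations to give the claimed exponent. Controlling the degree blow-up $B\mapsto B^C$ through all $d_2$ iterations is what forces the technical hypothesis $\log B\ll\log(1/\eta)$ already required by Lemma 4.2.
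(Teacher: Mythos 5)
The paper does not supply a proof of this lemma; it is quoted verbatim from Bourgain [B2007, Lemma~1.18] and used as a black box. So the comparison is with Bourgain's proof rather than with anything in this text. Your overall skeleton (induction on $d_2$, pairing the $y_i$'s into a set $\hat{\mathcal{S}}$ in $[0,1]^{2d_1+d_2-1}$, and invoking the projection lemma at each step) does reflect the actual architecture of Bourgain's argument, and the observation that $\bigcap_{i\le 2^{d_2}}\mathcal{S}(y_i)\neq\emptyset$ forces $\bigcap_{j\le 2^{d_2-1}}\hat{\mathcal{S}}(y_{2j-1},y_{2j})\neq\emptyset$ is exactly the right reduction. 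The Cauchy--Schwarz computation bounding the pre-image by $\eta^2$ is also correct.

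The gap is in the sentence ``Applying Lemma 4.2 \dots converts the $\eta^2$ control into an effective bound on the $v'$-section of $\hat{\mathcal{S}}$ itself.'' That is not what Lemma~\ref{projl} gives you. The projection lemma decomposes the pre-image into $\mathcal{S}_1\cup\mathcal{S}_2$, where only $\mathrm{Proj}_{x_1}\mathcal{S}_1$ is small; for $\mathcal{S}_2$ it controls the measure of transversal $d_2$-dimensional slices, which bounds $\mathrm{mes}(\mathcal{S}_2)$ via Fubini but says nothing about $\mathrm{Proj}_{x_1}\mathcal{S}_2$. A set of measure $\eta^2$ in $[0,1]^{2d_1+1}$ can perfectly well project onto all of $[0,1]^{2d_1}$ (a paper-thin slab does this), and semi-algebraicity of bounded degree does not by itself prevent that. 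So you cannot verify the inductive hypothesis $\mathrm{mes}_{2d_1}(\hat{\mathcal{S}}(v'))<\eta'$ by the route you propose, and the induction as written does not close. The fix in Bourgain's argument is to carry the decomposition $\mathcal{S}_1\cup\mathcal{S}_2$ forward rather than discard it: the small projection of $\mathcal{S}_1$ is subtracted off directly, and one works with the $\mathcal{S}_2$-piece using its transversal slice bound as the new input, so that the quantity one controls at each stage is a slice measure rather than a projection measure. This is where the genuine content of the lemma lies, and it is the step you would need to supply.

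A secondary issue: the recursion $(d_1,\eta)\mapsto (2d_1,\,B^C\eta^{2/(2d_1+1)})$ that you write down yields a cumulative exponent $\prod_{k=0}^{d_2-1}\frac{2}{2^k d_1+1}$, which is not equal to $d_1^{-d_2}2^{-d_2(d_2-1)/2}$; already for $d_1=1,d_2=2$ your product gives $\tfrac{2}{3}\cdot\tfrac{2}{5}$ rather than $\tfrac12$. The claimed exponent emerges from a somewhat different choice of the free parameter $\varepsilon$ in the projection lemma at each stage and from the decomposition being retained. This is bookkeeping, but it signals that the inductive passage is not the one you wrote.
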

\begin{center}
\begin{tikzpicture}[scale=1.3]
\draw[<->](0,4)--(0,0)--(4,0);
\draw (0,4) node [left] {$y$};
\draw (4,0) node [below right] {$x$};
\draw (0.2,0.1) to [out=80,in=178] (3,3);
\draw (0.2,0.3) to [out=80,in=180] (3,3.1);
\draw[dashed] (1,0)--(1,4);
\draw [](0.2,0.1)--(0.2,0.3);
\draw (1.1,2.5) node [left] {$x=x_0$};
\draw (3,3)node [below] {$\mathcal{S}$};
\draw (3,3) to [out=10,in=100] (3,3.1);
\draw [dashed,red] (0,3.1)--(3,3.1);
\draw [dashed,red](0.2,0.1)--(0,0.1);
\draw [line width=0.05cm](1,2.0)--(1,2.15);
\draw (2,-0.5)node [below] {\textit{Eliminating multiple variables}};
\draw [red](1,2.15) circle [radius=0.15cm];
\draw [fill,red](1,2.15) circle [radius=0.6pt];
\end{tikzpicture}
\end{center}
\begin{lem}[Lemma 1.20, \cite{B2007}]\label{rmf}
Let $\emptyset\neq I\subset\{1,\cdots,d\}$ and
 let $\mathcal{S}\subset [0,L]^{\ell|I|}$ be a semi-algebraic set of degree $B$  such that
$$\mathrm{mes}_{\ell|I|}(\mathcal{S})<\eta.$$

For any
$$\alpha= (\alpha_i)_{i\in I}\in\mathbb{R}^{I},\ n=(n_i)_{i\in I}\in\mathbb{Z}^{I},$$
define
$$n \alpha=\left(n_i\alpha_i\right)_{i\in I}\in\mathbb{R}^{I}.$$

For any $C>1$, let $\mathcal{N}_1,\cdots,\mathcal{N}_{\ell-1}\subset \mathbb{Z}^I$ be finite sets satisfying
$$\min\limits_{ s\in I}|n_s|>(B\max\limits_{s\in I}|n'_s|)^C, $$
where $n\in \mathcal{N}_{i}, n'\in\mathcal{N}_{i-1}\  (2\leq i\leq \ell-1)$.

Let $L\sim \max\limits_{n\in\mathcal{N}_{\ell-1}}|n|$. Then there is some $C_0=C_0(\ell,|I|)>0$ such that for any $C\geq C_0$ and
$\max\limits_{n\in\mathcal{N}_{\ell-1}}|n|^C<\frac{1}{\eta}, $ one has
\begin{equation*}
\mathrm{mes}(\{\alpha\in [0,1]^{I}:\  \exists \ n^{(i)}\ \in\mathcal{N}_i\  s.t.,\  (\alpha,n^{(1)}\alpha,\cdots,n^{(\ell-1)}\alpha)\in \mathcal{S}\})\leq B^{C_0}\delta,
\end{equation*}
where
$$\delta^{-1}=\min\limits_{n\in\mathcal{N}_1}\min\limits_{s\in I}|n_s|.$$
\end{lem}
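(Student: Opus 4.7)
The plan is to prove Lemma~\ref{rmf} by induction on $\ell$, using Lemma~\ref{projl} as the main engine at each step. The guiding idea is to decompose $\mathcal{S}$ so that one piece has small projection onto the $\alpha$-block, while the other is transverse to the $|I|$-dimensional affine subspaces
\[
\mathcal{L}_{n}=\{(\alpha,n^{(1)}\alpha,\ldots,n^{(\ell-1)}\alpha):\alpha\in\mathbb{R}^{I}\},\qquad n=(n^{(1)},\ldots,n^{(\ell-1)})\in\prod_{k}\mathcal{N}_{k}.
\]
The key observation is that the separation hypothesis $\min_{s}|n_{s}^{(k+1)}|>(B\max_{s}|n_{s}^{(k)}|)^{C}$ is exactly what supplies a uniform $\varepsilon$ for which the angle condition in Lemma~\ref{projl} holds simultaneously for all $\mathcal{L}_{n}$.

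For the base case $\ell=2$, after rescaling $[0,L]^{2|I|}$ to $[0,1]^{2|I|}$ apply Lemma~\ref{projl} with $d_{1}=d_{2}=|I|$, $x_{1}=\alpha$, and $\varepsilon=\delta$. The tangent vectors to $\mathcal{L}_{n}$ are the orthogonal $v_{i}=e_{i}^{(\alpha)}+n_{i}e_{i}^{(n\alpha)}$, so $|\mathrm{Proj}_{\mathcal{L}_{n}}e_{i}^{(\alpha)}|=\|v_{i}\|^{-1}\leq |n_{i}|^{-1}\leq \delta$ for every $n\in\mathcal{N}_{1}$. The piece $\mathcal{S}_{1}$ contributes $B^{C(d)}\delta$ directly through the projection bound. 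For $\mathcal{S}_{2}$, convert the transversality estimate $\mathrm{mes}_{|I|}(\mathcal{L}_{n}\cap\mathcal{S}_{2})\leq B^{C(d)}\delta^{-1}\eta^{1/(2|I|)}$ into an $\alpha$-measure via the Jacobian $\prod_{i}\|v_{i}\|\geq \delta^{-|I|}$, and sum over $n\in\mathcal{N}_{1}$; the global smallness $L^{C}<\eta^{-1}$ is tailored so that $|\mathcal{N}_{1}|\lesssim L^{|I|}$ is absorbed against $\eta^{1/(2|I|)}$, yielding a total $\ll\delta$.

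For the inductive step, apply Lemma~\ref{projl} to $\mathcal{S}\subset [0,L]^{\ell|I|}$ with $x_{1}=(\alpha,n^{(1)}\alpha,\ldots,n^{(\ell-2)}\alpha)$ and $x_{2}=n^{(\ell-1)}\alpha$, taking $\varepsilon=M_{\ell-2}/m_{\ell-1}$ where $M_{k}=\max_{\mathcal{N}_{k}}|n|$ and $m_{k}=\min_{\mathcal{N}_{k}}\min_{s}|n_{s}|$. The projection $\mathrm{Proj}_{x_{1}}\mathcal{S}_{1}$ is a semi-algebraic set in $(\ell-1)|I|$ variables of degree $B^{O(1)}$ and measure $\leq B^{C(d)}\varepsilon$; the separation gap $m_{\ell-1}>(BM_{\ell-2})^{C}$ is precisely what makes the inductive statement applicable to this reduced set with the chain $\mathcal{N}_{1},\ldots,\mathcal{N}_{\ell-2}$, returning the desired bound $B^{C_{0}'}\delta$. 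For $\mathcal{S}_{2}$, the same separation yields $|\mathrm{Proj}_{\mathcal{L}_{n}}e_{i}^{(k)}|\leq |n_{i}^{(k)}|/|n_{i}^{(\ell-1)}|\leq M_{\ell-2}/m_{\ell-1}=\varepsilon$ for $0\leq k\leq \ell-2$, so summing the transversality bound over all tuples $n\in\prod_{k}\mathcal{N}_{k}$ and invoking $\eta<M_{\ell-1}^{-C}$ again produces a contribution $\ll\delta$.

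The main obstacle is bookkeeping rather than any single hard estimate: one must choose $C_{0}=C_{0}(\ell,|I|)$ large enough that three requirements close simultaneously along the induction, namely (i) the admissible range $\eta^{1/d}\leq \varepsilon\ll 1$ in Lemma~\ref{projl}, (ii) the applicability of the inductive hypothesis to $\mathrm{Proj}_{x_{1}}\mathcal{S}_{1}$ with its new smallness parameter $B^{C(d)}\varepsilon$, and (iii) the combinatorial weight $\prod_{k}|\mathcal{N}_{k}|\leq \prod_{k}M_{k}^{|I|}$ appearing in the $\mathcal{S}_{2}$ sum being dominated by $\eta^{1/(\ell|I|)}\varepsilon^{-1}m_{\ell-1}^{-|I|}$. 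The fast growth $m_{k+1}>(BM_{k})^{C}$ along the chain, together with the single global smallness $\max_{\mathcal{N}_{\ell-1}}|n|^{C}<1/\eta$, provide just enough slack to satisfy all three, which is why $C_{0}$ ultimately depends only on $\ell$ and $|I|$.
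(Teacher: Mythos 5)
The paper itself gives no proof of this lemma: it is quoted verbatim as Lemma 1.20 of Bourgain \cite{B2007}, and the burden of proof is left entirely to that reference. Your overall strategy — induction on $\ell$, with Lemma \ref{projl} supplying a decomposition into a small-projection piece fed to the inductive hypothesis and a transversal piece handled via the angle estimate and a sum over tuples $n$ — is indeed the mechanism behind Bourgain's argument, and the angle computation $|\mathrm{Proj}_{\mathcal{L}_n}e_i^{(k)}|\leq|n_i^{(k)}|/|n_i^{(\ell-1)}|$ is correct and is exactly why the separation hypothesis enters.

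There is, however, a concrete gap in the way you invoke Lemma \ref{projl}. That lemma is stated for $\mathcal{S}\subset[0,1]^{d}$, whereas here $\mathcal{S}\subset[0,L]^{\ell|I|}$ with $L\sim M_{\ell-1}$ large. Your write-up says ``after rescaling'' and then uses the bounds $\mathrm{mes}_{d_1}(\mathrm{Proj}_{x_1}\mathcal{S}_1)\leq B^{C(d)}\varepsilon$ and $\mathrm{mes}_{d_2}(\mathcal{L}\cap\mathcal{S}_2)\leq B^{C(d)}\varepsilon^{-1}\eta^{1/(\ell|I|)}$ as if they held in the original scale. They do not: rescaling $\mathcal{S}$ to $[0,1]^{\ell|I|}$ replaces $\eta$ by $\tilde\eta=\eta/L^{\ell|I|}$, and undoing the scaling multiplies the $d_1$-dimensional projection bound by $L^{d_1}$ and the $d_2$-dimensional slice bound by $L^{d_2-1}$. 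In the base case $\ell=2$ with $\varepsilon=\delta=1/m_1$, the correct unscaled projection bound is $L^{|I|}B^{C(d)}/m_1$, not $B^{C(d)}/m_1$, and this extra $L^{|I|}$ is not absorbed anywhere in your projection leg (you only absorb the count $|\mathcal{N}_1|\lesssim L^{|I|}$ against $\eta^{1/(2|I|)}$ on the $\mathcal{S}_2$ side). More seriously, in the inductive step the two demands on $\varepsilon$ become incompatible once the $L^{(\ell-1)|I|}$ factor is tracked: the angle condition forces $\varepsilon\geq M_{\ell-2}/m_{\ell-1}$, while applicability of the inductive hypothesis at scale $M_{\ell-2}$ requires $L^{(\ell-1)|I|}B^{C(d)}\varepsilon<M_{\ell-2}^{-C'}$, and since $L=M_{\ell-1}\geq m_{\ell-1}$ one has $L^{(\ell-1)|I|}B^{C(d)}\cdot M_{\ell-2}/m_{\ell-1}\geq M_{\ell-2}$, so the two cannot hold simultaneously. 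Closing the argument requires either a sharper, scale-aware formulation of Lemma \ref{projl}, or a partition of $[0,L]^{\ell|I|}$ into unit (or $M_{\ell-2}$-scale) cubes with a Hölder-type aggregation of the local $\eta_Q$'s and a different balancing choice of $\varepsilon$; as written your bookkeeping does not account for this, so the induction does not close with the stated parameter choices.
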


The following result was indeed proved by Bourgain \cite{B2007} in case $d=3$ and it can be easily extended to any $d\geq 1$ (see \cite{JLS} for details).
\begin{lem}[\cite{B2007}]\label{ldtp2}
Let $0<\tau_2<\tau_1<1$, $C_\star\gg1$ and $N_2\sim (\log \overline{N})^C$ for some $C>1$.
Suppose that for $\emptyset\neq I\subset\{1,\cdots,d\} $, $\theta\in \mathbb{R}^{d}$  and $\overline{N}^{\tau_2}\leq L\leq\overline{N}^{\tau_1}$,
there is no sequence $n^{(1)},\cdots,n^{(2^{d})}\in \mathbb{Z}^{{I}}\cap[-\overline{N},\overline{N}]^I$ satisfying
 \begin{eqnarray*}
\label{stp1}&&\min_{s\in I}|n_s^{(1)}|>L^{C_\star},\\
\label{stp2}&&\min_{s\in I}|n^{(i+1)}_s|\geq (L|n^{(i)}|)^{C_\star} \ (1\leq i\leq 2^{d}-1),
\end{eqnarray*}
 such that the following holds:  for all $Q_L\in \mathcal{E}_L^0$ and $1\leq i\leq 2^{d}$,  $G_{\Lambda}^{u}(\theta)$ fails $ \mathrm{(\ref{f1}) + (\ref{f2})}$ for $\Lambda=Q_L+\sum\limits_{j\in I} n^{(i)}_je_j\in\mathcal{E}_L$, where
\begin{eqnarray}
\label{f1}&&||G_{\Lambda}^u(\theta)||\leq e^{2N_2^{\kappa}}\ (0<\kappa<1),\\
\label{f2}&& |G_{\Lambda}^u(\theta)(n,n')|\leq e^{-\rho|n-n'|}\ \mathrm{for}\ |n-n'|>N_2^{1+},
\end{eqnarray}
and $|u(n)|\leq Ce^{-\rho|n|}, \rho>0$.

Suppose moreover that
$$\tau_2(C_\star+1)^{2^{(d-1)(d+2)}}\leq\tau_1.$$

Then for  $\theta\in\mathbb{R}^{d}$, the following statements hold:
\begin{itemize}
\item[(i)] There is $N$ (depending on $\theta$) satisfying $\overline{N}^{\tau_2}< N<\overline{N}^{\tau_1}$ such that if
 $$\Lambda=[-N,N]^{d}\setminus [-N^{\frac{1}{C_\star}},N^{\frac{1}{C_\star}}]^{d},$$ then
 \begin{eqnarray*}
&&||G_\Lambda^u(\theta)||\leq e^{3N_2^\kappa},\\
&&|G_\Lambda^u(\theta)(n,n')|\leq e^{-(\rho-\overline{N}^{-\frac{\tau_2}{2}})|n-n'|}\ \mathrm{for}\ |n-n'|>N_2^{1+}.
 \end{eqnarray*}
\item[(ii)]Let $Q_{\overline{N}}\in\mathcal{E}_{\overline{N}}^0$. Then for any $m\in Q_{\overline{N}}$, there exist some  $\overline{N}^{\tau_2}< N<\overline{N}^{\tau_1}$ and $\Lambda_1,\Lambda\subset Q_{\overline{N}}$ so that  $$\mathrm{diam}(\Lambda)\sim N, \mathrm{diam}(\Lambda_1)\sim N^{\frac{1}{2d}}, \ m\in \Lambda_1\subset \Lambda,\  \mathrm{dist}(m,Q_{\overline{N}}\setminus \Lambda)\geq\frac{N}{2},$$
    and
     \begin{eqnarray*}
&&||G_{\Lambda\setminus \Lambda_1}^u(\theta)||\leq e^{3N_2^\kappa},\\
&&|G_{\Lambda\setminus \Lambda_1}^u(\theta)(n,n')|\leq e^{-(\rho-\overline{N}^{-\frac{\tau_2}{2}})|n-n'|}\ \mathrm{for}\ |n-n'|>N_2^{1+}.
 \end{eqnarray*}
\end{itemize}
\end{lem}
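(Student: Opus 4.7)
The plan is to follow Bourgain's multi-scale strategy from \cite{B2007}, where the case $d=3$ was first established, as adapted in \cite{JLS} to more general ergodic settings. The statement is deterministic: at a $\theta$ for which the chain-avoidance hypothesis holds, one assembles good Green's function bounds on large elementary regions by pasting together good bounds on small ones through the resolvent identity.

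First I would set up the scale induction, starting at a base scale $L_0\sim\overline{N}^{\tau_2}$. The hypothesis, applied for each nonempty $I\subset\{1,\ldots,d\}$ and each $L$ in the allowed range, forbids a geometrically-spaced chain of $2^d$ translates $n^{(i)}\in\mathbb{Z}^I$ all of whose corresponding Green's functions $G_{n^{(i)}+Q_L}^u(\theta)$ fail either (\ref{f1}) or (\ref{f2}). Hence within any would-be such chain there is at least one good translate, and by the covariance (\ref{cov}) a good translate at $n$ corresponds to a good parameter shift at $\theta+n\nu$ on a fixed template $Q_L\in\mathcal{E}_L^0$. Using this abundance of good translates, I would tile the target region by disjoint good elementary regions at scale $L_0$.

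Next I would propagate from scale $L$ to scale $\sim L^{C_\star}$ by a resolvent expansion. Writing the restriction of $F_u(\theta)$ to a larger region $\Lambda$ as $D_\Lambda+T_\Lambda$, where $D_\Lambda$ is block-diagonal over the good tiling (so each block is invertible with norm $\leq e^{2N_2^\kappa}$ and off-diagonal exponential decay by the inductive hypothesis) and $T_\Lambda$ carries the coupling between blocks coming from $\epsilon S_u$, the Neumann series
\[
G_\Lambda^u(\theta) = \bigl(I+D_\Lambda^{-1}T_\Lambda\bigr)^{-1}D_\Lambda^{-1}
\]
converges because $S_u(n,n')=p\widehat{fu^{p-1}}(n-n')$ decays exponentially (from $|u(n)|\leq Ce^{-\rho|n|}$ and analyticity of $f$), and the good blocks are well-separated. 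A path-expansion of each matrix entry then yields operator norm at most $e^{3N_2^\kappa}$ together with off-diagonal decay at the slightly degraded rate $\rho-\overline{N}^{-\tau_2/2}$ whenever $|n-n'|>N_2^{1+}$. Iterating through scales $L_0,L_0^{C_\star},L_0^{C_\star^2},\ldots$ up to some $N\in(\overline{N}^{\tau_2},\overline{N}^{\tau_1})$ produces conclusion (i) on the annulus $[-N,N]^d\setminus[-N^{1/C_\star},N^{1/C_\star}]^d$. For (ii), the same scheme applies after first excising a region $\Lambda_1$ of diameter $\sim N^{1/(2d)}$ around $m$ to absorb any residual small-scale singularity; the tiling of $\Lambda\setminus\Lambda_1$ is arranged so that $\mathrm{dist}(m,Q_{\overline{N}}\setminus\Lambda)\geq N/2$.

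The principal obstacle I anticipate is the combinatorial bookkeeping over the $2^d$ direction-subsets $I\subset\{1,\ldots,d\}$. Each subset demands its own multi-scale sweep, and at each sweep the window of admissible scales contracts by a factor of order $C_\star$; accumulating all the sweeps produces precisely the exponent $2^{(d-1)(d+2)}$ appearing in the arithmetic condition $\tau_2(C_\star+1)^{2^{(d-1)(d+2)}}\leq\tau_1$, which is what ensures the entire iteration still fits inside $(\overline{N}^{\tau_2},\overline{N}^{\tau_1})$. One must also verify carefully that elementary regions in $\mathcal{E}_{L^{C_\star}}^0$ decompose cleanly into unions of elements of $\mathcal{E}_L^0$ so that the tiling respects the corner structure of $\mathcal{E}_L$, and the small loss $\overline{N}^{-\tau_2/2}$ in the decay rate comes from geometrically summing resolvent tails across successive scales.
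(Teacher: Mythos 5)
The paper itself does not prove this lemma: it defers entirely to Bourgain \cite{B2007} (for $d=3$) and to \cite{JLS} for the extension to general $d$. So the comparison has to be against Bourgain's argument, and there your proposal goes wrong at the central combinatorial step.

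You write that the chain-avoidance hypothesis provides an ``abundance of good translates'' at the base scale $L_0$, and that one should therefore ``tile the target region by disjoint good elementary regions at scale $L_0$'' and then paste with the resolvent identity. The hypothesis does not support this. It only forbids a very specific configuration: $2^d$ bad translates (lying in a coordinate-subspace $\mathbb{Z}^I$) whose minimal coordinates grow geometrically, with gap factor $L^{C_\star}$. This is perfectly consistent with the bad translates at scale $L_0$ being dense, or forming large clusters; it only rules out chains that \emph{spread out super-exponentially across scales}. In particular, for a \emph{fixed} $N$ there is no reason a covering of $[-N,N]^d$ by scale-$L_0$ boxes should consist mostly (or at all) of good boxes, so the resolvent pasting you describe cannot get started at a predetermined intermediate scale. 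This is why conclusion (i) asserts only the \emph{existence} of some $\theta$-dependent $N\in(\overline N^{\tau_2},\overline N^{\tau_1})$ with a good annulus, not goodness at any particular $N$.

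Bourgain's actual argument runs the other way: it is a contrapositive multi-scale search. Starting from $L_0\sim\overline N^{\tau_2}$, one asks whether the annulus $[-L_1,L_1]^d\setminus[-L_1^{1/C_\star},L_1^{1/C_\star}]^d$ (for $L_1\sim L_0^{C_\star}$) has a good Green's function; if yes, one is done with $N=L_1$. If not, a bad elementary region inside the annulus supplies $n^{(1)}$, and one moves to the next scale $L_2\sim L_1^{C_\star}$ and repeats. Because each new annulus strictly encloses the previous one, consecutive bad translates automatically satisfy the separation $\min_s|n_s^{(i+1)}|\geq(L|n^{(i)}|)^{C_\star}$. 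After $2^d$ failures one has manufactured exactly the chain the hypothesis forbids, so some intermediate step must have succeeded. The passage over the $2^d-1$ nonempty $I$'s is then a nested version of this search (a sweep for each coordinate subset, restricting the failures to $\mathbb{Z}^I$), and that nesting — not an accumulation of tiling sweeps — is what produces the exponent $2^{(d-1)(d+2)}$ in the scale-compatibility condition $\tau_2(C_\star+1)^{2^{(d-1)(d+2)}}\leq\tau_1$. Your intuition about $\overline N^{-\tau_2/2}$ arising from geometric summation of resolvent tails is correct, as is the role of covariance (\ref{cov}); but the proof has to be reorganised from ``construct a global good tiling'' to ``search over scales and derive a contradiction from persistent failure,'' and (ii) then follows by running the same search centred at $m$ rather than at the origin, with $\Lambda_1$ playing the role of the inner hole of the annulus.
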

Finally, we introduce the powerful matrix-valued Cartan estimate with $1$-dimensional parameters. For a generation to several variables  case, we refer to \cite{JLS}.

\begin{lem}[Matrix-valued Cartan estimate,\ \cite{BGS2002,B2007}]\label{mcl}
Let $T(x)$ be a self-adjoint $N\times N$ matrix function of a parameter $x\in[-\delta,\delta]$  satisfying the following conditions:
\begin{itemize}
\item[(i)] $T(x)$ is real analytic in $x\in [-\delta,\delta]$ and has a holomorphic extension to
\begin{equation*}
\mathcal{D}_{\delta,\delta_1}=\{x\in\mathbb{C}: \ |\Re x|\leq\delta,\ |\Im x|\leq \delta_1\}
\end{equation*}
satisfying
\begin{equation*}
\sup_{x\in \mathcal{D}_{\delta,\delta_1}}||T(x)||\leq B_1.
\end{equation*}
\item[(ii)]  For all $x\in[-\delta,\delta]$, there is subset $V\subset [1,N]$ satisfying the length
\begin{equation*}|V|<M,\end{equation*}
and
\begin{equation*}
||(R_{[1,N]\setminus V}T(x)R_{[1,N]\setminus V})^{-1}||\leq B_2.
\end{equation*}
\item[(iii)]
\begin{equation*}
\mathrm{mes}\{x\in[-{\delta}, {\delta}]: \ ||T^{-1}(x)||\geq B_3\}\leq 10^{-3}\delta_1(1+B_1)^{-1}(1+B_2)^{-1}.
\end{equation*}
Let
\begin{equation*}
0<\varepsilon\leq (1+B_1+B_2)^{-10 M}.
\end{equation*}
\end{itemize}
Then
\begin{equation*}
\mathrm{mes}\left\{x\in\left[-\frac{\delta}{2}, \frac{\delta}{2}\right]:\  ||T^{-1}(x)||\geq \varepsilon^{-1}\right\}\leq C\delta
 e^{-\frac{c\log \varepsilon^{-1}}{M\log(M+B_1+B_2+B_3)}}.
\end{equation*}
\end{lem}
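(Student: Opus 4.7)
My plan is to reduce the matrix-valued estimate to the classical scalar Cartan lemma for analytic functions via a Schur-complement argument. Condition (ii) effectively asserts that $T(x)$ has at most $M$ ``dangerous'' directions, so the analytic function that ends up controlling $\|T^{-1}(x)\|^{-1}$ should have complexity $\sim\Lambda^{M}$ rather than $\Lambda^{N}$, where $\Lambda:=M+B_1+B_2+B_3$. This is precisely what produces the factor $M\log\Lambda$ in the denominator of the final exponent.

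First, I would use condition (iii) to fix a base point $x_0\in[-\delta,\delta]$ with $\|T^{-1}(x_0)\|\leq B_3$, and invoke condition (ii) at $x_0$ to obtain a subset $V_0\subset[1,N]$ with $|V_0|<M$ such that $D(x):=R_{[1,N]\setminus V_0}T(x)R_{[1,N]\setminus V_0}$ satisfies $\|D(x_0)^{-1}\|\leq B_2$. Since $T$ is holomorphic on $\mathcal{D}_{\delta,\delta_1}$ with $\|T\|\leq B_1$, Cauchy's estimate plus a Neumann series gives $\|D(x)^{-1}\|\leq 2B_2$ on a complex disc $\Omega_0$ around $x_0$ of radius comparable to $\delta_1/(B_1B_2)$. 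On $\Omega_0$ the Schur complement $S(x):=A(x)-B(x)D(x)^{-1}B(x)^{*}$ is holomorphic of size $|V_0|\leq M-1$, obeys $\|S(x)\|\leq\Lambda^{C}$, and the block inversion formula
$$T(x)^{-1}=\begin{pmatrix}S(x)^{-1} & -S(x)^{-1}B(x)D(x)^{-1}\\ -D(x)^{-1}B(x)^{*}S(x)^{-1} & D(x)^{-1}+D(x)^{-1}B(x)^{*}S(x)^{-1}B(x)D(x)^{-1}\end{pmatrix}$$
yields $\|T^{-1}(x)\|\leq(1+B_1B_2)^{2}\|S(x)^{-1}\|+2B_2$.

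Next I introduce the scalar holomorphic function $f(x):=\det S(x)$. Cramer's rule on the $\leq M$-sized matrix $S$ gives $\|S(x)^{-1}\|\leq \|S(x)\|^{M-1}/|f(x)|$, so the hypothesis $\|T^{-1}(x)\|\geq\varepsilon^{-1}$ forces $|f(x)|\leq\Lambda^{CM}\varepsilon$. At the base point, the block structure of $T(x_0)^{-1}$ makes $S(x_0)^{-1}$ the $(V_0,V_0)$-block of $T(x_0)^{-1}$, so $\|S(x_0)^{-1}\|\leq\|T^{-1}(x_0)\|\leq B_3$; hence every eigenvalue of $S(x_0)$ has modulus at least $1/B_3$ and $|f(x_0)|\geq B_3^{-M}\geq\Lambda^{-M}$. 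Applying the classical scalar Cartan lemma (e.g.\ in Levin's form) to $f$ on $\Omega_0$, with $\sup_{\Omega_0}|f|\leq\Lambda^{CM}$ and $|f(x_0)|\geq\Lambda^{-M}$, yields
$$\mathrm{mes}\bigl\{x\in[-\delta/2,\delta/2]:|f(x)|\leq\Lambda^{CM}\varepsilon\bigr\}\leq C\delta\exp\!\left(-\frac{c\log\varepsilon^{-1}}{M\log(M+B_1+B_2+B_3)}\right),$$
where the hypothesis $\varepsilon\leq(1+B_1+B_2)^{-10M}$ absorbs the prefactor $\Lambda^{CM}$ into $\log\varepsilon^{-1}$.

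The main obstacle I anticipate is that condition (ii) only guarantees some $V=V(x)$ at each $x$, whereas the Schur reduction above fixes a single $V_0$ that is known to work only on the complex neighborhood $\Omega_0$, whose imaginary width is a factor $B_1B_2$ smaller than $\delta_1$. This is resolved by choosing finitely many base points $\{x_j\}\subset[-\delta,\delta]$ via a covering of $[-\delta/2,\delta/2]$ by the associated discs $\Omega_j$, with each $x_j$ supplied by condition (iii) after removing the (small) bad set already controlled at previous stages; the number of centers is polynomial in $\Lambda$, the resulting prefactor is again absorbed by the smallness of $\varepsilon$, and the exponential rate coming from Cartan is preserved in the final sum.
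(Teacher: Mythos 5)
Your proposal reproduces the standard proof of the matrix-valued Cartan estimate as it appears in the cited references (Bourgain--Goldstein--Schlag, \textit{Acta Math.}\ 2002, and Chapter~14 of Bourgain's book): Schur reduction to a block of size at most $M-1$, Cramer's rule to pass from $\|S(x)^{-1}\|$ to the scalar determinant $f=\det S$, a lower bound for $|f|$ at a base point obtained from conditions (ii) and (iii), the classical one-variable Cartan estimate, and a covering of $[-\delta/2,\delta/2]$ by discs of radius $\sim\delta_1/(B_1B_2)$ on which a fixed $V_0$ and the Neumann-series bound on $D(x)^{-1}$ remain valid. The paper itself states the lemma without proof, so there is nothing to compare against internally, but your outline is the correct argument and you have correctly flagged (and resolved) the only subtlety, namely that the choice of $V_0$ is only local. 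A couple of small inaccuracies that do not affect the result: with $|V_0|\leq M-1$ the Cramer/Hadamard bound reads $\|S^{-1}\|\lesssim (M-1)!\,\|S\|^{M-2}/|\det S|$ rather than $\|S\|^{M-1}/|\det S|$, and similarly $|f(x_0)|\geq B_3^{-(M-1)}$; these polynomial and slightly different exponential prefactors are, as you indicate, absorbed by the hypothesis $\varepsilon\leq(1+B_1+B_2)^{-10M}$ and the constant $c$ in the final exponent.
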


We are  ready to prove LDT for Green's functions.

We first  prove the LDT for $G_N^u(\theta)$ if $N$ is not too large.
\begin{lem}\label{inl}
Define for $\delta>0$ the set $I=\{x\in\mathbb{R}:\ |(x+a)^2+b|<\delta\}$, where $a,b\in\mathbb{R}$. Then  $\mathrm{mes}(I)\leq C\delta^{\frac{1}{2}}$, where $C>0$ is an absolute constant.
\end{lem}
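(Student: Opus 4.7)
The statement is elementary, so the plan is short. The key observation is that the translation $y = x+a$ reduces the problem to bounding the measure of
\[
J = \{y\in\mathbb{R} : |y^2 + b| < \delta\} = \{y\in\mathbb{R} : y^2 \in (-b-\delta,\, -b+\delta)\},
\]
which depends only on $b$ and $\delta$, not on $a$. So I would first make this substitution and then split into cases according to the sign and size of $b$.

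For the case $b \geq \delta$, the interval $(-b-\delta, -b+\delta)$ lies in $(-\infty, 0]$, so $J = \emptyset$ and the bound is trivial. For the case $-\delta < b < \delta$, the lower endpoint $-b-\delta$ is negative, so the condition $y^2 < -b+\delta$ reduces to $|y| < \sqrt{\delta - b}$, giving $\mathrm{mes}(J) \leq 2\sqrt{2\delta}$. For the remaining case $b \leq -\delta$, both endpoints are nonnegative, and
\[
J = \{y : \sqrt{-b-\delta} < |y| < \sqrt{-b+\delta}\},
\]
so the measure equals $2\bigl(\sqrt{-b+\delta} - \sqrt{-b-\delta}\bigr)$. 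Rationalizing gives
\[
\sqrt{-b+\delta} - \sqrt{-b-\delta} = \frac{2\delta}{\sqrt{-b+\delta} + \sqrt{-b-\delta}} \leq \frac{2\delta}{\sqrt{2\delta}} = \sqrt{2\delta},
\]
where the last inequality uses $-b+\delta \geq 2\delta$ in this case.

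Combining the three cases yields $\mathrm{mes}(I) = \mathrm{mes}(J) \leq 2\sqrt{2}\,\delta^{1/2}$, which is the claimed bound with $C = 2\sqrt{2}$. There is no real obstacle here; the only mild care needed is to carry out the case split cleanly and to use the elementary identity $\sqrt{u+v}-\sqrt{u} = v/(\sqrt{u+v}+\sqrt{u})$ to get the correct $\delta^{1/2}$ scaling when the two square roots are both positive and could be individually large.
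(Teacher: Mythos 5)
Your proof is correct and is exactly the elementary case analysis one would expect; the paper itself omits the proof entirely, declaring it ``trivial.'' The translation $y=x+a$, the three-way split on $b$, and the rationalization trick $\sqrt{-b+\delta}-\sqrt{-b-\delta}=2\delta/(\sqrt{-b+\delta}+\sqrt{-b-\delta})$ are all sound and give the explicit constant $C=2\sqrt{2}$.
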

\begin{proof}
The proof is trivial and we omit the details here.
\end{proof}

\begin{lem}\label{ildt}
Let $\gamma\in (0,1),\rho>0$. Then there exist $N_0\gg1, \Sigma>1,\kappa\in (0,1)$ so that,  for any $u$ satisfying $|u(n)|\leq Ce^{-\rho|n|}$,  $N_0\leq N\leq |\log\epsilon|^\Sigma$ and any $\nu\in[1,2]^{d}$,
\begin{eqnarray*}
&& ||G_{N}^u(\theta)||\leq e^{N^{\kappa}},\\
&& |G_N^u(\theta)(n,n')|\leq e^{-\frac{4\rho}{5}|n-n'|}\  {\mathrm{for} \ |n-n'|\geq \frac{N}{10}},
\end{eqnarray*}
where $\theta$ is outside a set $X_N\subset\mathbb{R}^{d}$ satisfying for any  $1\leq j\leq d,$
\begin{equation*}\sup_{\theta_j^\neg\in\mathbb{R}^{d-1}}\mathrm{mes}(X_N(\theta_j^\neg))\leq e^{-N^\gamma},
\end{equation*}
with
$$\theta_j^\neg=(\theta_1,\cdots,\theta_{j-1},\theta_{j+1},\cdots,\theta_{d}).$$
\end{lem}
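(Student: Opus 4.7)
The plan is to exploit the smallness of $\epsilon$ at this initial scale. For $N \leq |\log\epsilon|^\Sigma$, the perturbation $\epsilon S_u$ is small enough relative to the diagonal $D(\theta) = \mathrm{diag}\big(\sum_i (\nu_i n_i + \theta_i)^2 - m\big)$ that the Green's function can be constructed by a direct Neumann-series inversion of $F_u(\theta) = D(\theta) + \epsilon S_u$, bypassing the multi-scale machinery of Lemmas \ref{projl}--\ref{mcl}. The task thus reduces to two steps: controlling the bad set where the diagonal is resonant, and controlling the norm and off-diagonal decay of $S_u$.

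For the diagonal, fix a direction $j \in \{1,\dots,d\}$ and arbitrary $\theta_j^\neg$. For each $n \in [-N,N]^d$, the entry $D(\theta)(n) = (\theta_j + \nu_j n_j)^2 + c_n$ with $c_n = \sum_{i \neq j}(\nu_i n_i + \theta_i)^2 - m$ constant in $\theta_j$, so Lemma \ref{inl} gives
\[ \mathrm{mes}\{\theta_j : |D(\theta)(n)| < \delta\} \leq C \delta^{1/2}. \]
Taking the union over $n \in [-N,N]^d$ and setting $\delta = e^{-3N^\gamma}$ produces a bad set $X_N$ whose $\theta_j$-section has measure at most $C N^d e^{-\frac{3}{2}N^\gamma} \leq e^{-N^\gamma}$ for $N \geq N_0$ large. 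Outside $X_N$, every diagonal entry satisfies $|D(\theta)(n)| \geq e^{-3N^\gamma}$, hence $\|R_\Lambda D(\theta)^{-1} R_\Lambda\| \leq e^{3N^\gamma}$ for every $\Lambda \in \mathcal{E}_N^0$.

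Since $|u(n)| \leq C e^{-\rho|n|}$ and $f$ is a trigonometric polynomial, standard convolution arguments give, for any $\rho_1$ slightly below $\rho$, the Töplitz bound $|S_u(n,n')| \leq C_1 e^{-\rho_1|n-n'|}$, and Schur's test then yields $\|S_u\|_{\ell^2 \to \ell^2} \leq C_2$. Choose $\Sigma$ with $1 < \Sigma < 1/\gamma$; then for $N_0 \leq N \leq |\log\epsilon|^\Sigma$,
\[ \epsilon \|R_\Lambda D(\theta)^{-1} R_\Lambda\| \cdot \|S_u\| \leq C_2\, \epsilon\, e^{3|\log\epsilon|^{\Sigma\gamma}} \to 0 \quad (\epsilon \to 0). \]
The Neumann series for $(R_\Lambda F_u(\theta) R_\Lambda)^{-1} = (I + \epsilon R_\Lambda D^{-1} S_u R_\Lambda)^{-1} R_\Lambda D^{-1} R_\Lambda$ converges, giving $\|G_N^u(\theta)\| \leq 2 e^{3N^\gamma} \leq e^{N^\kappa}$ for any $\kappa \in (\gamma,1)$. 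For off-diagonal decay, expand the Neumann series term by term: the $k$-th term at $(n,n')$ is a path sum over $n = n_0,n_1,\dots,n_k = n'$ bounded by $(\epsilon\|D^{-1}\|C_3)^k e^{-\rho_2|n-n'|}$ with $\rho_2 < \rho_1$, where the slight rate loss absorbs the lattice sum over intermediate nodes. Summing in $k$,
\[ |G_N^u(\theta)(n,n')| \leq \frac{\epsilon\|D^{-1}\|^2 C_3}{1 - \epsilon\|D^{-1}\|C_3}\, e^{-\rho_2|n-n'|}. \]
Since $|n - n'| \geq N/10$ and $\gamma < 1$, the prefactor $e^{O(N^\gamma)}$ is absorbed by an arbitrarily small loss of exponential rate, yielding $|G_N^u(\theta)(n,n')| \leq e^{-\frac{4\rho}{5}|n-n'|}$ provided $\rho_2 \in (\tfrac{4\rho}{5},\rho)$ and $N_0$ is large enough.

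The main obstacle is coordinating three parameters: the bad-set exponent $\gamma$, the scale exponent $\Sigma$ (which must satisfy $1 < \Sigma < 1/\gamma$), and the decay loss $\rho \to \tfrac{4\rho}{5}$. The sectional measure estimate in the first step works precisely because the diagonal is \emph{quadratic in each coordinate separately}, so Lemma \ref{inl} applies slice-by-slice; and the prefactor absorption in the last step succeeds because $\gamma < 1$ makes $e^{O(N^\gamma)}$ negligible against $e^{cN}$ for any $c>0$ once $N \geq N_0$.
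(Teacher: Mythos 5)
Your proposal is correct and follows exactly the route the paper indicates in its one-line proof: eliminate resonant $\theta$ slice-by-slice via Lemma \ref{inl} (exploiting that each diagonal entry $\sum_i(\nu_i n_i+\theta_i)^2-m$ is quadratic in $\theta_j$ with the remaining coordinates frozen), then invert $R_\Lambda F_u(\theta)R_\Lambda$ by a Neumann series, using that $\epsilon\, e^{3N^\gamma}\ll 1$ for $N\leq|\log\epsilon|^\Sigma$ when $\Sigma\gamma<1$. The elaboration of the parameter bookkeeping ($\delta=e^{-3N^\gamma}$, $1<\Sigma<1/\gamma$, $\kappa\in(\gamma,1)$, absorbing the $e^{O(N^\gamma)}$ prefactor into the decay rate over $|n-n'|\geq N/10$) is sound and fills in precisely what the paper leaves implicit.
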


\begin{proof}
The proof is based on Lemma \ref{inl} and a standard Neumann series argument.
\end{proof}

We define the following statements ${\bf (S)}_N$ for $N\in\mathbb{N}$.
\begin{defn}[${\bf (S)}_N$]
Let $\kappa,\gamma,\zeta\in (0,1)$. There is some semi-algebraic set $\Omega_N\subset[1,2]^{d}$ of degree at most $N^{4d}$ such that for $\nu\in\Omega_N$,
there exists some $X_N\subset\mathbb{R}^{d}$ so that  for any $1\leq j\leq d$,
\begin{equation*}\sup_{\theta_j^\neg\in\mathbb{R}^{d-1}}\mathrm{mes}(X_N(\theta_j^\neg))\leq e^{-N^{\gamma}},\end{equation*}
and for $\theta\notin X_N$,
\begin{eqnarray}
\label{ldt1}&& ||G_{N}^{u_N}(\theta)||\leq e^{N^{\kappa}},\\
\label{ldt2}&& |G_N^{u_N}(\theta)(n,n')|\leq e^{-(\rho_N-)|n-n'|}\  {\mathrm{for} \ |n-n'|\geq \frac{N}{10}},
\end{eqnarray}
where $\rho_N>0$ and $u_N\in\mathbb{C}^{\mathbb{Z}^{d}}$ is rational in $\nu$ so that
$$\deg(u_N)\leq e^{(\log N)^4},\  |u_N(n)|\leq Ce^{-\rho_N|n|}.$$
Moreover, $\Omega_N\subset\Omega_{N_1}\cap\Omega_{N_2}$ with  ${N_1}\sim (\log N)^{\frac{2}{\gamma^2}}, N_2\sim(\log N)^{\frac{4}{3\gamma}}<N$  and $$\mathrm{mes}((\Omega_{N_1}\cap\Omega_{N_2})\setminus\Omega_N)\leq N^{-\zeta}.$$
\end{defn}

The main result of this section is the following LDT for Green's functions.

\begin{thm}[LDT]\label{ldt}
There are $\zeta,\kappa\in (0,1)$ such that the following  holds:  Assume $|\log\log \epsilon|\geq N_0$ and $({\bf S})_N$ holds for $N_0\leq N<\overline{N}, \ \overline{N}\geq |\log\epsilon|^\Sigma$.  
Then $({\bf S})_{\overline{N}}$ holds with $$u_{\overline{N}}=u_{{N_2}},\ {N_2}\sim (\log \overline{N})^{\frac{4}{3\gamma}}.$$
Moreover, the estimates $\mathrm{(\ref{ldt1})}$  and $\mathrm{(\ref{ldt2})}$ (for $N=\overline{N}$) remain valid if $u_{\overline{N}}$ is replaced by some $u$ satisfying:  $u$ is rational in $\nu$ and
$$\deg(u)\leq e^{(\log \overline{N})^4}, ||u-u_{\overline{N}}||_{\ell^2}\leq e^{-\rho_{{N_2}}(\log \overline{N})^{\frac{1}{\gamma^3}}}.$$
\end{thm}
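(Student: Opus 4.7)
The plan is to run Bourgain's multi-scale induction (\cite{B2007}, in the form adapted by \cite{JLS}) to pass from scales $N<\overline{N}$ to $\overline{N}$. The main devices are the covariance identity (\ref{cov}), which makes $G_\Lambda^u(\theta)$ behave like a quasi-periodic Schr\"odinger resolvent on $\mathbb{Z}^d$ with frequency $\nu$; the elementary-region propagation Lemma \ref{ldtp2}; the semi-algebraic elimination Lemmas \ref{projl}, \ref{svl}, \ref{rmf}; and the matrix-valued Cartan estimate, Lemma \ref{mcl}.

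First I would set the intermediate scales $N_1\sim(\log\overline{N})^{2/\gamma^2}$ and $N_2\sim(\log\overline{N})^{4/3\gamma}$ required by $(\textbf{S})_{\overline{N}}$, and define $u_{\overline{N}}:=u_{N_2}$; the rationality, exponential decay and degree bound $\deg(u_{\overline{N}})\leq e^{(\log\overline{N})^4}$ are inherited from $(\textbf{S})_{N_2}$. For $\nu\in\Omega_{N_1}\cap\Omega_{N_2}$, the inductive hypotheses give the Green's function estimates (\ref{ldt1})--(\ref{ldt2}) at scales $N_1$ and $N_2$ outside small sections $X_{N_j}$. Using covariance (\ref{cov}), the failure of (\ref{f1})--(\ref{f2}) on a translate $Q_L+\sum n^{(i)}_je_j$ corresponds to $\theta+n^{(i)}\nu$ lying in the bad set at scale $L$, so to invoke Lemma \ref{ldtp2} at scale $\overline{N}$ with $\tau_2(C_\star+1)^{2^{(d-1)(d+2)}}\leq\tau_1$ I must exclude chains $n^{(1)},\ldots,n^{(2^d)}$ satisfying (\ref{stp1})--(\ref{stp2}) on which the bad events all occur simultaneously.

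The heart of the proof is the resulting semi-algebraic analysis. The bad set in $(\nu,\theta)$ is cut out by polynomial inequalities coming from the entries of $F_{u_{N_2}}(\theta)$ and the thresholds in (\ref{f1})--(\ref{f2}), with degree bounded by $\deg(u_{N_2})\leq e^{(\log N_2)^4}$. Applying Lemma \ref{svl} to the inductive pointwise-in-$\theta$ section bound $e^{-N_j^\gamma}$ gives smallness across all $2^d$ copies, and Lemma \ref{rmf} with $\mathcal{N}_i$ matching the growth exponents in (\ref{stp2}) then bounds the $\nu$-measure of the event ``some bad chain exists'', yielding $\Omega_{\overline{N}}$ with $\mathrm{mes}((\Omega_{N_1}\cap\Omega_{N_2})\setminus\Omega_{\overline{N}})\leq\overline{N}^{-\zeta}$ and degree at most $\overline{N}^{4d}$. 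For $\nu\in\Omega_{\overline{N}}$, Lemma \ref{ldtp2}(i)--(ii) combined with a coupling resolvent identity then yields the off-diagonal decay (\ref{ldt2}) at scale $\overline{N}$ with rate $\rho_{N_2}-\overline{N}^{-\tau_2/2}$. To upgrade the operator norm to the sub-exponential bound (\ref{ldt1}) I would apply Lemma \ref{mcl} with one-dimensional parameter $x=\theta_j$ ($1\leq j\leq d$): the off-diagonal decay supplies hypothesis (ii) with $M$ of order $\overline{N}^{1-1/(2d)}$, a Neumann estimate off the small-divisor zeros gives (iii), and the Cartan conclusion produces $\mathrm{mes}(X_{\overline{N}}(\theta_j^\neg))\leq e^{-\overline{N}^\gamma}$ with $\|G^{u_{\overline{N}}}_{\overline{N}}(\theta)\|\leq e^{\overline{N}^\kappa}$ off $X_{\overline{N}}$, provided $\kappa>\gamma$.

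The perturbation part is handled by propagating smallness through the inductive scales: at scale $N_2$ the resolvent norm is only $e^{N_2^\kappa}=e^{(\log\overline{N})^{4\kappa/(3\gamma)}}$, which is dominated by $\|u-u_{\overline{N}}\|_{\ell^2}^{-1}\geq e^{\rho_{N_2}(\log\overline{N})^{1/\gamma^3}}$ (this is the calibration of the $1/\gamma^3$ exponent, requiring $\kappa<3/(4\gamma^2)$), so a Neumann series replaces $u_{\overline{N}}$ by $u$ in the estimates at the smaller scales, and the scale-$\overline{N}$ step is then rerun with $u$ in place of $u_{\overline{N}}$. I expect the principal obstacle to be the semi-algebraic analysis in paragraph three — specifically, translating the chain hypotheses of Lemma \ref{ldtp2} into a semi-algebraic set of controlled degree in $(\nu,\theta)$, and invoking Lemma \ref{rmf} with $\mathcal{N}_i$ matching the growth exponents $(L|n^{(i)}|)^{C_\star}$, while keeping the $\nu$-exceptional set within $\overline{N}^{-\zeta}$; this is where dimension $d\geq 3$ forces the scale $N_1\sim(\log\overline{N})^{2/\gamma^2}$ and where the ideas of \cite{B2007,JLS} are most delicate.
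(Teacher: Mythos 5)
Your proposal reproduces the paper's argument essentially step for step: the same intermediate scales $N_1\sim(\log\overline{N})^{2/\gamma^2}$, $N_2\sim(\log\overline{N})^{4/(3\gamma)}$, the same definition $u_{\overline{N}}=u_{N_2}$, the same sequence of tools (covariance to translate failure on shifted boxes into membership of $\theta+n\nu$ in a bad set at the small scale, then Lemma~\ref{svl} and Lemma~\ref{rmf} to pass to a low-measure semi-algebraic exclusion set in $\nu$, then Lemma~\ref{ldtp2} plus resolvent identity for propagation, then Lemma~\ref{mcl} on unit cubes to close the sublinear-exponential norm bound and measure estimate on $X_{\overline{N}}$), and the same Neumann-series observation — that $\|G^{u_{N_2}}_{N_2}\|\leq e^{N_2^\kappa}=e^{(\log\overline{N})^{4\kappa/(3\gamma)}}$ is dominated by $e^{\rho_{N_2}(\log\overline{N})^{1/\gamma^3}}$ — to handle the ``moreover'' perturbation clause. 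Your explicit calibrations ($\kappa<3/(4\gamma^2)$ for the perturbation, $\kappa$ large enough relative to $\gamma$ and the Cartan $M$) are consistent with, and slightly more explicit than, what the paper records, so this is the same proof.
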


\begin{proof}[\textbf{Proof of Theorem \ref{ldt}}]
 Defining scales $$N_1=\left\lfloor(\log \overline{N})^{\frac{2}{\gamma^2}}\right\rfloor,\ N_2:=\left\lfloor N_1^{\frac{2\gamma}{3}}\right\rfloor, $$ then $$\overline{N}\sim e^{N_1^{\frac{\gamma^2}{2}}}.$$   If we assume  $$|\log\log \epsilon|\geq N_0,$$ and $({\bf S})_N$ holds for $N_0\leq N<\overline{N}, \ \overline{N}\geq (\log\lambda)^\Sigma$, then
  $N_1>N_2\geq N_0$, which shows $({\bf S})_N$ holds for $N=N_1,N_2$.
Similar to \cite{B2007}, the set in $(\nu,\theta)$ defined by (\ref{ldt1}) and (\ref{ldt2})  can be replaced by  a semi-algebraic set of degree at most $e^{C(\log N)^4}$.

Consider now any scale $L$ with $$N_1<L<\overline{N}.$$ Let $S_L$ be the set of all $(\nu,\theta)\in \mathbb{R}^{d}\times\mathbb{R}^{d}$ such that for any $n\in [-L,L]^{d}$, $\nu\in \Omega_{N_1}\cap\Omega_{N_2}, \theta+n \nu\in X_{N_2}$.
 Direct computations gives
\begin{equation}\label{ldt5}\deg(S_L)\leq Ce^{C(\log N_1)^4}(2L+1)^{d}N_1^{{C}}\leq L^{C}.\end{equation}
 Fix  $1\leq j\leq d$ and $\theta_j^\neg\in \mathbb{R}^{d-1}$.
 Thus we have
\begin{equation}\label{ldt6}
\mathrm{mes}(\mathbb{R}\setminus  S_L(\theta_{j}^\neg))\leq C(2L+1)^{d}e^{-N_2^{\gamma}}\leq e^{-\frac{N_2^\gamma}{2}}.\end{equation}
Moreover, if $(\nu,\theta)\in S_L$,
 by using the resolvent identity (see \cite{JLS} for details),
one has
\begin{eqnarray*}
&& ||G^{u_{N_2}}_L(\theta)||\leq e^{2N_2^{\kappa}},\\
&&|G^{u_{N_2}}_L(\theta)(n,n')|\leq e^{-\rho_{1} |n-n'|}\ \mbox{for $|n-n'|>N_2^{1+}$},
\end{eqnarray*}
where $$\rho_{1}= \rho_{N_2}-{N_2^{0-}}.$$

In the following,  we will eliminate the variable $\theta$. This needs make further restrictions on $\nu$. By fixing $I\subset\{1,\cdots,d\}$,
 define
\begin{equation}\label{ldt9}
\mathcal{A}:=\left\{(\nu,\theta,y)\in \mathbb{[1,2]}^{d}\times \mathbb{R}^{d}\times \mathbb{R}^{I}:\  \nu\in \Omega_{N_2}\cap\Omega_{N_1}, (\nu,(\theta_j+y_j)_{j\in {I}},(\theta)_{j\notin {I}})\notin S_L\right\}.
\end{equation}
Obviously, by (\ref{ldt5}) and (\ref{ldt9}),
\begin{equation}\label{ldt10}\deg(\mathcal{A})\leq L^{C}.\end{equation}
Fix $\nu$ and consider
$$\mathcal{A}_1:=\mathcal{A}(\nu)\subset\mathbb{R}^{d}\times\mathbb{R}^{I}.$$

We note that $G_{\overline{N}}^{u_{N_2}}(\theta)$ satisfies off-diagonal exponential decay property for all $\nu\in [1,2]^d$ and $|\theta|\gtrsim \overline{N}$.
Without loss of generality, it suffices to assume
 $\mathcal{A}_1\subset [0,C\overline{N}]^d\times [0,C\overline{N}]^{I}$.
From (\ref{ldt6}), for all $\theta$,
\begin{equation}\label{ldt11}
\mathrm{mes}(\mathcal{A}_{1}(\theta))\leq \eta:=e^{-\frac{N_2^\gamma}{2}}.
\end{equation}
By (\ref{ldt10}), (\ref{ldt11}) and  Lemma \ref{svl},
\begin{equation*}
\mathcal{A}_{2}=\left\{(y_i)_{1\leq i\leq 2^{d}}:\  \bigcap_{1\leq i\leq 2^{d}}\mathcal{A}_{1}(y^{(i)})\neq \emptyset\right\}\subset[0,C\overline{N}]^{|I|2^{d}}.
\end{equation*}
 is a semi-algebraic set of degree
\begin{equation}\label{ldt12}\deg(\mathcal{A}_{2})\leq L^{C}\end{equation}
and measure
\begin{equation}\label{ldt13}
\mathrm{mes}(\mathcal{A}_{2})\leq\eta_1:= \overline{N}^C\eta^{|I|^{-d}2^{-d(d-1)/2}}.
\end{equation}
Notice that for $N_0\gg1$ and $C_\star\gg1$,
\begin{equation*}
\frac{1}{\eta_1}\gg\overline{N}^{C_\star}.
\end{equation*}
 Then from  (\ref{ldt12}), (\ref{ldt13})
and  Lemma \ref{rmf},  the set $\mathcal{A}_{3}\subset \mathbb{[1,2]}^{I}$ containing $\nu_{I}:=(\nu_j)_{j\in I}$, which is defined by the following:
\textit{there is  some sequence} $n^{(1)},\cdots,n^{(2^{d})}\in \mathbb{Z}^{{I}}\cap[-\overline{N},\overline{N}]^I$ \textsl{satisfing}
\begin{eqnarray}
\label{stp1}&&\min_{s\in I}|n_s^{(1)}|>L^{C_\star},\\
\label{stp2}&&\min_{s\in I}|n^{(i+1)}_s|\geq (L|n^{(i)}|)^{C_\star} \ (1\leq i\leq 2^{d}-1),
\end{eqnarray}
\textsl{such that}
\begin{equation*}
(\nu_{I},n^{(1)}\nu_{I},\cdots,n^{(2^{d})}\nu_{{I}})\in\mathcal{A}_{2},
\end{equation*}
satisfies
\begin{eqnarray}\label{ldt14}
&& \mathrm{mes}(\mathcal{A}_{3})\leq L^{-C_\star}L^{C},
\end{eqnarray}
where $C_\star\gg C $.
It is easy to see the total number of  $n^{(1)},\cdots,n^{(2^{d})}\in \mathbb{Z}^{{I}}\cap[-\overline{N},\overline{N}]^I$ satisfying (\ref{stp1}) and (\ref{stp2}) can be bounded by $(2\overline{N}+1)^{2d}$.
 Recalling (\ref{ldt12}), $\mathcal{A}_{3}$ is a semi-algebraic set of degree
\begin{equation}\label{ldt15}
\deg(\mathcal{A}_{3})\leq C(2\overline{N}+1)^{2d}L^{C}\leq \overline{N}^{3d},
\end{equation}
if
\begin{equation*}
\log L\leq c_1\log \overline{N},
\end{equation*}
where $0<c_1=c_1(d)\ll1$.
Define $$\Omega_{\overline{N}}:=\bigcap\limits_{\emptyset\neq I\subset\{1,\cdots,d\}}\left\{\nu\in\Omega_{N_2}\cap\Omega_{N_1}:\  \nu_{{I}}\notin\mathcal{A}_{3}\right\}.$$
If we assume
\begin{equation*}
\log L\geq {c_2}\log\overline{N},
\end{equation*}
 then by (\ref{ldt14}) and (\ref{ldt15}), for $C_\star\gg C$,
 \begin{eqnarray*}
&& \mathrm{mes}((\Omega_{N_2}\cap\Omega_{N_1})\setminus\Omega_{\overline{N}})\leq C({d})L^{-C_\star}L^C\leq \overline{N}^{-\zeta},\\
&& \deg(\Omega_{\overline{N}})\leq C(d)L^CN_{2}^{4d}N_{1}^{4d}\overline{N}^{3d}\leq \overline{N}^{4d},
\end{eqnarray*}
where $c_2>0$ will be specified below and $\zeta$ depends on $c_2$.

Let $$c_2(C_\star+1)^{2^{(d-1)(d+2)}}=c_1.$$ Then for $\nu\in\Omega_{\overline{N}}$, the assumptions of Lemma \ref{ldtp2} are satisfied.

We then construct $X_{\overline{N}}$ and finish the proof. Again, it suffices to restrict the considerations on $B(0,3\overline{N})=\{\theta\in\mathbb{R}^d:\ |\theta|\leq 3\overline{N}\}$.  As in \cite{B2007}, on each unit cube in $B(0,3\overline{N})$, using Lemma \ref{mcl}, Lemma \ref{ldtp2} and the resolvent identity,  one can find such $X_{\overline{N}}$ for $\nu\in\Omega_{\overline{N}}$.
We refer to \cite{JLS} (see also \cite{B2007}) for details.
\end{proof}

\begin{rem}\label{krem}
From the definition of $\Omega_{\overline{N}}$, the set $\Omega_{\overline{N}}$ is basically stable under perturbations of order $e^{-(\log \overline{N})^{\frac{1}{\gamma^3}}}, \ 0<\gamma\ll1.$  More precisely, one can replace $\Omega_{\overline{N}}$ by the set $$\bigcup_{i: I_i\cap\Omega_{\overline{N}}\neq\emptyset}I_i,$$ where the union runs over a partition of $[1,2]^{d}$ of cubes $I_i$ of side length $e^{-(\log \overline{N})^{\frac{1}{\gamma^3}}}$. This leads to a reformulation of Theorem \ref{ldt}, i.e., Theorem \ref{ldtr} below. This point will be useful in Section \ref{pfl}.
\end{rem}

We fix a large integer $A$ satisfying
$$ 1\ll N_0\leq A\leq  |\log\epsilon|^{\Sigma}.$$
\begin{thm}\label{ldtr}
Let $0<\epsilon\ll1$ and $r\geq 1$. Then there exists a collection $\Gamma_r$ of cubes $I$ (in $[1,2]^{d}$) of side length $e^{-(r\log A)^{\frac{1}{\gamma^3}}}$ satisfying the following:
\begin{itemize}
\item[(i)]If $\nu\in I\in\Gamma_r$, then there exists some  set $X_{A^r}$  such that, for $1\leq j\leq d$, 
     \begin{equation*}\mathrm{mes}( X_{A^r}(\theta_{j}^\neg))\leq e^{-A^{r\gamma}},\end{equation*}
and for $\theta\notin X_{A^r}$,
\begin{eqnarray*}
&& ||G_{A^r}^{u_{A^r}}(\theta)||\leq e^{A^{\kappa r}},\\
&& |G_{A^r}^{u_{A^r}}(\theta)(n,n')|\leq e^{-\frac{\rho}{2}|n-n'|}\  {\mathrm{if} \ |n-n'|\geq \frac{A^r}{10}}.
\end{eqnarray*}

\item[(ii)] Each $I\in\Gamma_r$ is contained in a cube $I'\in\Gamma_{r-1}$ and
\begin{eqnarray*}
&& \Gamma_r=\{[1,2]^{d}\}\ (1\leq r\leq r_\star),\\
&&\mathrm{mes}\left(\bigcup_{I'\in\Gamma_{r-1}}I'\setminus\bigcup_{I\in\Gamma_r}I\right)\leq A^{-\frac{\zeta r}{2}} \ (r>r_\star),
\end{eqnarray*}
where $r_\star=\left\lfloor\frac{\log|\log\epsilon|^\Sigma}{\log A}\right\rfloor$.
\end{itemize}
\end{thm}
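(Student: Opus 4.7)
\medskip

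\noindent\textbf{Proof proposal.} The plan is to iterate Theorem \ref{ldt} along the scales $\overline{N}=A^r$ ($r=1,2,\ldots$) and then invoke the robustness statement in Remark \ref{krem} to replace each semi-algebraic set $\Omega_{A^r}$ by an honest union of cubes of side length $\delta_r:=e^{-(r\log A)^{1/\gamma^3}}$. This side length is precisely the perturbation-stability scale of Remark \ref{krem} applied at $\overline{N}=A^r$, so the LDT estimates will persist pointwise across any chosen $\delta_r$-cube.

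For the base range $1\leq r\leq r_\star$ we have $A^r\leq|\log\epsilon|^{\Sigma}$, and Lemma \ref{ildt} supplies the LDT estimates for every $\nu\in[1,2]^d$ without any restriction. Hence one may set $\Gamma_r=\{[1,2]^d\}$, which trivially satisfies (i) and the first line of (ii). For $r>r_\star$ I would proceed by induction on $r$: assuming $({\bf S})_{A^s}$ has been verified for all $N_0\leq A^s<A^r$, apply Theorem \ref{ldt} with $\overline{N}=A^r$ to produce a new semi-algebraic set $\Omega_{A^r}\subset \Omega_{A^{r-1}}$ with measure loss at most $(A^r)^{-\zeta}$. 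Then fix a partition of $[1,2]^d$ into cubes of side length $\delta_r$ taken to be a refinement of the partition used at scale $r-1$ (since $\delta_r<\delta_{r-1}$, subdivide each $\delta_{r-1}$-cube into a grid of $\delta_r$-cubes), and define $\Gamma_r$ to be the subcollection of those $\delta_r$-cubes that meet $\Omega_{A^r}$. Remark \ref{krem} guarantees that the LDT estimates in (i) then hold uniformly over every $\nu$ in any such cube, and the nesting requirement in (ii) is automatic from $\Omega_{A^r}\subset\Omega_{A^{r-1}}$.

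For the measure bound in (ii), the removed region $\bigcup\Gamma_{r-1}\setminus\bigcup\Gamma_r$ consists of $\delta_r$-cubes inside some $I'\in\Gamma_{r-1}$ that do not touch $\Omega_{A^r}$. Its measure is controlled by $\mathrm{mes}(\Omega_{A^{r-1}}\setminus\Omega_{A^r})$ plus a boundary correction of width $\delta_r$ around $\partial\Omega_{A^r}$. Theorem \ref{ldt} bounds the first piece by $(A^r)^{-\zeta}$. The boundary piece is dominated by $\delta_r$ times a polynomial in $\deg\Omega_{A^r}\leq (A^r)^{4d}$ via the standard Bezout-type measure estimates on tubular neighborhoods of semi-algebraic sets; because $\delta_r$ is super-polynomially small in $r\log A$, the correction is negligible compared to $(A^r)^{-\zeta}$. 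Summing these contributions yields the claimed bound $A^{-\zeta r/2}$.

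The main obstacle will be handling the interaction between the super-polynomial shrinkage of $\delta_r$ and the hierarchical refinement of partitions: a naive dyadic refinement does not align with the side lengths $\delta_r$, so I would carve each $\delta_{r-1}$-cube into an almost-regular $\delta_r$-grid, absorbing the tiny overshoot into the boundary correction. A related subtlety is verifying that the perturbation stability of Remark \ref{krem} can be applied uniformly at every scale; one has to check that $u_{A^r}$ is rational in $\nu$ with $\deg(u_{A^r})\leq e^{(\log A^r)^4}$ and exponentially decaying, so that a $\delta_r$-shift of $\nu$ alters the Green's function only by $o(1)$ in operator norm, leaving (\ref{ldt1})--(\ref{ldt2}) intact.
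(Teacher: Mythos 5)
Your overall plan — iterate Theorem \ref{ldt} along the scales $\overline{N}=A^r$ and then cubify via Remark \ref{krem} — is exactly what the paper intends (the paper itself only cites \cite{JLS} for details), and the treatment of the base range $1\leq r\leq r_\star$ via Lemma \ref{ildt} is correct. However, there is a gap in the inductive step. You assert that Theorem \ref{ldt} applied at $\overline{N}=A^r$ produces $\Omega_{A^r}\subset\Omega_{A^{r-1}}$, but the theorem only gives nesting into the two \emph{polylogarithmic} scales $N_1\sim(\log\overline{N})^{2/\gamma^2}$ and $N_2\sim(\log\overline{N})^{4/(3\gamma)}$, which for $\overline{N}=A^r$ are of size $\sim(r\log A)^{2/\gamma^2}$ and $\sim(r\log A)^{4/(3\gamma)}$ — far smaller than $A^{r-1}$ and not of the form $A^s$. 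So $\Omega_{A^r}\subset\Omega_{A^{r-1}}$ is not given, and your measure argument leans on this containment to transfer the $A^{-\zeta r}$ bound from $(\Omega_{N_1}\cap\Omega_{N_2})\setminus\Omega_{A^r}$ to $\Omega_{A^{r-1}}\setminus\Omega_{A^r}$. Relatedly, your stated induction hypothesis only posits $({\bf S})_{A^s}$ at powers of $A$, while Theorem \ref{ldt} assumes $({\bf S})_N$ for \emph{all} $N_0\leq N<\overline{N}$ (and in its proof actually invokes $({\bf S})_N$ precisely at the scales $N_1,N_2$, which are not powers of $A$).

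Both issues are repairable but must be made explicit. One standard fix: first establish $({\bf S})_N$ for every integer $N$ between $N_0$ and the current scale (not just powers of $A$) by iterating Theorem \ref{ldt} over all $\overline{N}$, arranging in the process that the sets $\Omega_N$ are decreasing in $N$ (e.g.\ by replacing $\Omega_{\overline{N}}$ with $\Omega_{\overline{N}}\cap\bigcap_{N_0\leq N<\overline{N}}\Omega_N$, which is harmless since the measure bound in Theorem \ref{ldt} is against $\Omega_{N_1}\cap\Omega_{N_2}$ and the monotone family stays inside it). Only after this is done can one pass to the $A^r$-subsequence, take $\Gamma_r$ to be the $\delta_r$-cubes meeting $\Omega_{A^r}$ within a refinement of the $\Gamma_{r-1}$ grid, and bound $\mathrm{mes}(\bigcup\Gamma_{r-1}\setminus\bigcup\Gamma_r)$ by $\mathrm{mes}(\bigcup\Gamma_{r-1}\setminus\Omega_{A^r})\le\mathrm{mes}(\Omega_{A^{r-1}}\setminus\Omega_{A^r})+\mathrm{mes}(\bigcup\Gamma_{r-1}\setminus\Omega_{A^{r-1}})$. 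The first term is now $\le A^{-\zeta r}$ by monotonicity and the LDT; the second is your boundary correction, but it is at scale $r-1$ (controlled by $\delta_{r-1}\cdot\deg(\Omega_{A^{r-1}})^{O(1)}$), and it is indeed negligible because $\delta_{r-1}$ is super-exponentially small in $r\log A$. The final factor $1/2$ in the exponent $\zeta r/2$ absorbs these corrections. Your discussion of applying Remark \ref{krem} uniformly — checking that $u_{A^r}$ is rational of bounded degree and exponentially decaying so that a $\delta_r$-shift in $\nu$ perturbs the Green's function only slightly — is the right concern and matches the "Moreover" clause of Theorem \ref{ldt}.
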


\begin{proof}
We refer to \cite{JLS} for a detailed proof.
\end{proof}

\section{The proof of Main theorem: Nash-Moser algorithm}\label{pfl}
From Nash-Moser algorithm, the approximate solution $q_{r+1}$ at  $(r+1)^\mathrm{th}$  step can be derived from
$$q_{r+1}=q_r+\Delta_{r+1}q,$$
where the correction $\Delta_{r+1} q$ satisfies for $N=A^{r+1}$,
$$\Delta_{r+1} q=-G_{N}^{q_r}(0)F(q_r),$$
whenever $G_{N}^{q_r}(0)$  is {\textit{good}}, i.e., $G_{N}^{q_r}(0)$ satisfies the following estimates
\begin{eqnarray*}
&& ||G_{N}^{q_r}(0)||\leq A^{(r+1)^C},\\
&& |G_N^{q_r}(0)(n,n')|\leq e^{-c|n-n'|}\  {\mathrm{for} \ |n-n'|\geq (r+1)^C},
\end{eqnarray*}
where for some $C,c>0$.

As in \cite{BB2005},  to prove Theorem \ref{mthm},  it suffices to establish the following iteration arguments.
\begin{thm}
For small $\epsilon$, there exists some $A(\epsilon)\gg1$ such that  for any  $r\geq 1$ there is some $q_r(\nu)\in\mathbb{C}^{\mathbb{Z}^{d}}$ satisfying the following:
\begin{itemize}
\item[(i)] $\mathrm{supp}(q_r)\subset B(0,A^r)$.
\item[(ii)]  $$\sup_{\nu\in [1,2]^{d}}||\Delta_r q||_{\ell^2}\leq \sigma_r,\ \sup_{\nu\in [1,2]^{d}}||\partial_{\nu}\Delta_rq||_{\ell^2}\leq \overline{\sigma}_r, $$
where $$\log\log\frac{1}{\sigma_r+\overline{\sigma}_r}\sim r.$$
\item[(iii)] $$|q_r(n)|\leq Ce^{-\rho|n|}, \rho>0.$$
\item[(iv)] There is a collection $\Gamma_r$ of intervals $I$ in $\mathbb{R}^{d}$ of side length $A^{-r^C}$ so that the following holds:
\begin{itemize}
\item[(a)] On $I\in \Gamma_r$, $q_r(\nu,n)$ is given by a  rational function in $\nu$ of degree at most $A^{r^3}$.
\item[(b)] For $\nu\in\bigcup\limits_{I\in\Gamma_r}I$,
$$||F(q_r)||_{\ell^2}\leq \mu_r,\ ||\partial_\nu F(q_r)||_{\ell^2}\leq \overline{\mu}_r, $$
where $$\log\log\frac{1}{\mu_r+\overline{\mu}_r}\sim r.$$
\item[(c)] For $\nu\in\bigcup\limits_{I\in\Gamma_r}I$ and $N=A^r$,
\begin{eqnarray*}
&& ||G_{N}^{q_r}(0)||\leq A^{r^C},\\
&& |G_N^{q_r}(0)(n,n')|\leq e^{-\frac{\rho}{2}|n-n'|}\  {\mathrm{for} \ |n-n'|\geq r^C}.
\end{eqnarray*}
\item[(d)] Each $I\in\Gamma_r$ is contained in an interval $I'\in\Gamma_{r-1}$ and
\begin{eqnarray*}
&& \mathrm{mes}\left([1,2]^{d}\setminus\bigcup_{I\in\Gamma_1}I\right)\leq A^{-\frac{\zeta}{10}},\\
&&\mathrm{mes}\left(\bigcup_{I'\in\Gamma_{r-1}}I'\setminus\bigcup_{I\in\Gamma_r}I\right)\leq A^{-\frac{\zeta r}{10}},\ r\geq 2.
\end{eqnarray*}
\end{itemize}
\item[(v)]
\begin{eqnarray*}
&& \sigma_r<\sqrt{\epsilon} A^{-(\frac{4}{3})^r},\ \mu_r<\sqrt{\epsilon} A^{-(\frac{4}{3})^{r+2}},\\
&&\overline{\sigma}_r<\sqrt{\epsilon} A^{-\frac{1}{2}(\frac{4}{3})^r},\ \overline{\mu}_r<\sqrt{\epsilon} A^{-\frac{1}{2}(\frac{4}{3})^{r+2}}.
\end{eqnarray*}
\end{itemize}
\end{thm}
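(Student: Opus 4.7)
The plan is to prove the statement by induction on $r$ via a Newton iteration at the dyadic scales $N_r = A^r$. Given $q_r$ satisfying (i)--(v), I would set $q_{r+1} = q_r + \Delta_{r+1}q$ with the Newton correction
$$\Delta_{r+1}q = -R_{B(0,N_{r+1})}\, G_{N_{r+1}}^{q_r}(0)\, R_{B(0,N_{r+1})}\, F(q_r),$$
which is legitimate whenever $G_{N_{r+1}}^{q_r}(0)$ obeys the bounds in (iv)(c). The base case $r=1$ follows from a Neumann series: $F_{0}(0) = D + \epsilon S_0$ is a small perturbation of the diagonal operator $D$, so $G_A^0(0)$ exists and satisfies the required estimates (cf.\ Lemma \ref{ildt}) outside a set of $\nu$ of measure at most $A^{-\zeta/10}$ coming from the first-order small-divisor condition on $\sum_i \nu_i^2 n_i^2 - m$.

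For the inductive step the key input is Theorem \ref{ldtr}, applied at scale $N_{r+1}$ with $u = q_r$. Although Theorem \ref{ldtr} is phrased for the reference data $u_{A^{r+1}}$, Remark \ref{krem} together with the closing sentence of Theorem \ref{ldt} asserts that the conclusion persists for any potential $e^{-\rho_{N_2}(\log N_{r+1})^{1/\gamma^3}}$-close to $u_{A^{r+1}}$. The super-exponential bound $\sigma_r < \sqrt{\epsilon}\, A^{-(4/3)^r}$ in (v) beats this stability threshold, so $q_r$ is admissible and the Green's function inherits the LDT estimates on a collection of cubes $\Gamma_{r+1}$ whose complement in $\bigcup_{I' \in \Gamma_r} I'$ has measure at most $A^{-\zeta(r+1)/2}$, comfortably within the allowance in (iv)(d). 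The rational structure in (iv)(a) is preserved because each Newton step is a rational operation in $\nu$ on rational data, with polynomially controlled degree growth inherited from that of $u_{N_{r+1}}$.

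Quadratic convergence for $F(q_{r+1})$ follows by Taylor expansion:
$$F(q_{r+1}) = F(q_r) + F_{q_r}(0)\,\Delta_{r+1}q + Q(\Delta_{r+1}q),\qquad \|Q(h)\|_{\ell^2} \lesssim \|h\|_{\ell^2}^2,$$
using that $f$ is a trigonometric polynomial of fixed degree. After the Newton cancellation, what remains is a boundary/tail term from the truncation $R_{B(0,N_{r+1})}$, bounded by $e^{-c N_{r+1}}$ via the off-diagonal exponential decay of $G_{N_{r+1}}^{q_r}(0)$ paired against the exponential decay of $F(q_r)$ inherited from (iii); and the quadratic remainder, bounded by $\|G_{N_{r+1}}^{q_r}\|^2 \|F(q_r)\|^2 \lesssim A^{2(r+1)^C}\mu_r^2$. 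The square on $\mu_r$ defeats the polynomial loss, yielding $\mu_{r+1} < \sqrt{\epsilon}\, A^{-(4/3)^{r+3}}$ and likewise $\sigma_{r+1}$. The derivative bounds $\overline{\mu}_{r+1}, \overline{\sigma}_{r+1}$ follow analogously from $\partial_\nu G = -G(\partial_\nu F_{q_r})G$; the extra factor $\|\partial_\nu F_{q_r}(0) R_{B(0,N_{r+1})}\|$ is polynomially bounded in $N_{r+1}$ because the $\nu$-dependence enters only through diagonal entries $2\nu_i n_i^2$, and the budget for one derivative is precisely the weaker $\tfrac{1}{2}(4/3)^r$ exponent in (v). The exponential decay constant is preserved by taking $\rho_{r+1} = \rho_r - N_{r+1}^{-\tau}$ for a suitable $\tau > 0$, so that $\lim_r \rho_r \geq \rho/2$ uniformly.

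The principal obstacle is the rigidity of the Green's function estimate: Theorem \ref{ldtr} applies only to potentials super-exponentially close to $u_{A^{r+1}}$. The whole Newton scheme must therefore converge super-exponentially in order to feed its own output back as admissible data at the next scale, which is precisely why the rates in (v) are of tower type $(4/3)^r$, and also why the semi-algebraic degrees in Section 4 are kept polynomial (not exponential) in $N = A^r$. A secondary difficulty is that the potential $q_r$ at step $r$ is only known to be a rational function in $\nu$ of degree $A^{r^3}$, so all measure exclusions via Lemmas \ref{projl}--\ref{rmf} must be applied with quantitative control of this degree; this is exactly what the stability under $I \in \Gamma_r$ in Theorem \ref{ldtr} is set up to provide, and is the reason for defining $\Omega_{\overline{N}}$ as a union of small cubes rather than a sharp set.
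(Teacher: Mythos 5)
The overall Newton/Nash--Moser skeleton you describe is correct, and the bookkeeping of tail terms, quadratic remainder, and rational degree growth is essentially what the paper does. The serious gap is in how you obtain the key inductive input, namely the estimate on $G_{A^{r+1}}^{q_r}(0)$. You propose to ``apply Theorem~\ref{ldtr} at scale $N_{r+1}$ with $u=q_r$'' and conclude that ``the Green's function inherits the LDT estimates on a collection of cubes $\Gamma_{r+1}$.'' But Theorem~\ref{ldtr} gives estimates on $G_{A^{r}}^{u_{A^r}}(\theta)$ for $\theta$ \emph{outside} a small set $X_{A^r}$; it says nothing directly about $\theta=0$, and $0$ may well lie in $X_{A^r}$ for a given $\nu$. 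The paper does not apply the LDT at the full scale at all: it uses a genuine two-scale (``paving'') argument. On the core box $[-A^r,A^r]^d$ the good bound for $G_{A^r}^{q_r}(0)$ follows from the inductive hypothesis at scale $A^r$ plus the perturbation bound $\|q_r-q_{r-1}\|\ll e^{-A^r}$. On the annulus $U=\{A^r/2\le |n|\le A^{r+1}\}$ one covers by boxes of the logarithmically small size $M_0=A^{r_0}\sim(\log A^{r+1})^{4/(3\gamma)}$, uses the covariance property~(\ref{cov}) to rewrite the Green's function on the box around $n$ as $G_{M_0}^{q_{r_0}}(n\nu)$, invokes the LDT at the \emph{small} scale $M_0$ (with $q_{r_0}$), and eliminates the $\nu$'s for which some $n\nu$ lands in the bad $\theta$-set via the semi-algebraic projection Lemma~\ref{projl} (the set $\mathcal{S}_\star$). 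Only then does the resolvent identity patch the small boxes and the core together to yield $G_{A^{r+1}}^{q_r}(0)$. This is not a detail you can gloss over: if you tried to use the LDT-bad set at scale $A^{r+1}$ in Lemma~\ref{projl}, its semi-algebraic degree (polynomial in $A^{r+1}$) would be far too large relative to its measure to satisfy the hypothesis $\log B\ll\log(1/\eta)$, so the projection estimate would collapse. The whole reason $M_0$ is chosen $\sim(\log N)^{4/(3\gamma)}$ is to make the degree $e^{C(\log M_0)^4}$ small compared to the measure gain $e^{-M_0^\gamma/2}$.

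A secondary omission: you treat $r=1$ as the only base case, but the LDT machinery (Theorem~\ref{ldt}/\ref{ldtr}) is available only at scales $\overline N\ge|\log\epsilon|^\Sigma$, equivalently $r>r_\star\sim\log|\log\epsilon|^\Sigma/\log A$. The paper therefore runs a plain Neumann-series argument for the whole initial range $1\le r\le K\sim|\log\epsilon|^{\Sigma/2}$ before switching to the LDT-driven multiscale step; this initial stretch is also where the first measure exclusion $R_1$ (coming from $m>0$ via Lemma~\ref{inl}) and the starting set $\Gamma_1$ are produced. You should account for this range explicitly rather than folding it into the single base case $r=1$.
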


\begin{proof}
We start from $q_0=0$ and note that $F{(q_0)}=\epsilon\widehat{g}$.  We will construct $q_1$ firstly. It is easy to see
$$F_{q_0}(n,n)=\mathrm{diag}\left(\sum_{i=1}^d\nu_i^2n_i^2-m\right)_{n\in\mathbb{Z}^{d}} $$
and $F_{q_0}(n,n')=0$ for $n\neq n'$.
Let $A\gg1$. Due to $m>0$, then for any $0<\delta<m$, one has
$$|F_{q_0}(n,n)|>\delta\ \mathrm{for}\  |n|\leq A$$
where ${\nu}$ is outside a set $R_1$ of measure
$$\mathrm{mes}(R_1)\leq CA^{d}{\delta}^{\frac{1}{2}}.$$
Thus if ${\nu}\notin R_1$,
$$||\Delta_1q||_{\ell^2}\leq \epsilon\delta^{-1}||\widehat{g}||_{\ell^2}.$$
Using standard Neumann series arguments, one can prove the theorem for any $1\leq r\leq K\sim |\log\epsilon|^{\frac{\Sigma}{2}}$.

We will prove this theorem is true for any $r>K$. This can be completed by using LDT and make further restrictions on $\nu$. Assume $q_{r'}$ ($r'\leq r, r>K$) have been constructed and fulfill all the properties in (i)--(v). We want to construct $q_{r+1}$ and this needs to study the inverse of $R_{N}F_{q_r}(0)R_N$, where $N=A^{r+1}$. 

Firstly, in view of $||q_r-q_{r-1}||_{\ell^2}\leq \sigma_r\ll e^{-A^r}$, a standard perturbation argument implies for $\nu\in I\in \Gamma_{r}$
\begin{eqnarray}
\label{nm1}&& ||G_{A^r}^{q_r}(0)||\leq A^{r^C},\\
\label{nm2}&& |G_{A^r}^{q_r}(0)(n,n')|\leq e^{-\frac{\rho}{2}|n-n'|}\  {\mathrm{for} \ |n-n'|\geq r^C}.
\end{eqnarray}
Then we consider in $U=\{n\in\mathbb{Z}^{d}: \ \frac{A^r}{2}\leq |n|\leq N\}$. To prove  $G_U^{q_r}(0)$ has estimates (\ref{nm1}) (\ref{nm2}) with $Q_{A^r}$ being replaced by $U$, it needs make further restrictions on $\nu$ by using LDT and some semi-algebraic sets analysis arguments. Let $M_0=A^{r_0}$ satisfy
$$M_0\sim (\log N)^{\frac{4}{3\gamma}}.$$
Fix $I\in \Gamma_{r_0}$ and consider the following set
$$\mathcal{S}_N=\{(\nu,\theta)\in I\times [-N,N]^{d}: \ \nu\in I, G_{M_0}^{q_{r_0}}(\theta)\  {\mathrm{is\ not}\  \mathrm{\textit{good}} }\}.$$
Obviously, $\mathcal{S}_N$ is a semi-algebraic set of degree at most $e^{C(\log M_0)^4}$.
By Theorem \ref{ldtr}, we have
$$\mathrm{mes}(\mathcal{S}_N)\leq C N^{d}e^{-M_0^\gamma}\leq e^{-\frac{1}{2}M_0^\gamma}.$$
Using Lemma \ref{projl} as in \cite{B2007} (see also \cite{JLS}), the set $$\mathcal{S}_\star=\{\nu\in I:\ \exists \  n\in U\ s.t.,\  (\nu,n\nu)\in \mathcal{S}_N\}$$
has measure at most $A^{-{\frac{r}{2}}}$.

Sum over $I\in\Gamma_{r_0}$ and define $\Gamma_{r+1}$ to be the collection of cubes of side length $A^{-(r+1)^C}$ satisfying the following: elements of $\Gamma_{r+1}$ are derived from dividing $I'\in\Gamma_r$ into cubes $I$ of side length $A^{-(r+1)^C}$ so that $I\cap (\mathbb{R}^{d}\setminus\mathcal{S_\star})\neq\emptyset$.

Finally, on $I\in \Gamma_{r+1}$, using $||q_r-q_{r_0}||_{\ell^2}\ll e^{-(\log M_0)^{\frac{1}{\gamma^3}}}$ and the resolvent identity  implies the \textit{good} Green's function $G_{N}^{q_r}(0)$. The remainder then becomes clear and we refer to Chapter 18 of \cite{BB2005},  or \cite{BW2008} for details.
\end{proof}


\bibliographystyle{abbrv} 

\end{document}